                    \setlist[enumerate, 1]{1\textsuperscript{o}}
\theoremstyle{definition} 
\newtheorem{thm}{Theorem}[section]
\newtheorem{prop}[thm]{Proposition}
\newtheorem{lem}[thm]{Lemma}
\newtheorem{conj}[thm]{Conjecture}
\newtheorem{cor}[thm]{Corollary}
\newtheorem{defn}[thm]{Definition}
\newtheorem{ex}[thm]{Example}
\newtheorem{notn}[thm]{Notation}
\newtheorem{rmk}[thm]{Remark}
\begin{document}


\title{Towards a conjecture of Pappas and Rapoport on a scheme attached to the symplectic group}
\author{Hanveen Koh\thanks{\texttt{hkoh5@jhu.edu.} Department of Mathematics, Johns Hopkins University, Baltimore, MD 21218, US}}
\date{}
\maketitle

\begin{abstract}
Let $n=2r$ be an even integer. We consider a closed subscheme $V$ of the scheme of $n \times n$ skew-symmetric matrices, on which there is a natural action of the symplectic group $Sp(n)$. Over a field $F$ with $\mbox{char }F \neq 2$, the scheme $V$ is isomorphic to the scheme appearing in a conjecture by Pappas and Rapoport on local models of unitary Shimura varieties. With the additional assumption $\mbox{char }F = 0$ or $\mbox{char }F > r$, we prove the coordinate ring of $V$ has a basis consisting of products of pfaffians labelled by King's symplectic standard tableaux with no odd-sized rows. When $n$ is a multiple of 4, the basis can be used to show that the coordinate ring of $V$ is an integral domain, and this proves a special case of the conjecture by Pappas and Rapoport.
\end{abstract}


\section{Introduction}

Let $F$ be an infinite field and $n$ be a positive integer. It's well known that for each partition $\lambda = (\lambda_1, \lambda_2, \dotsc, \lambda_k)$ of some positive integer with $0 < \lambda_k \leq \cdots \leq \lambda_1 \leq n$, there is an associated $GL(n, F)$-module such that the bideterminants labelled by the standard tableaux of shape $\lambda$ form a basis of the module. This module is called the Weyl module and is irreducible when $F$ is of characteristic zero. 

Assume $n$ is an even integer. For the symplectic case, Berele [1] has given a basis of the irreducible $Sp(n, F)$-module, called the symplectic Weyl module, over a field $F$ of characteristic zero where the basis is labelled by the symplectic standard tableaux (defined by King [7]; see Definition 3.1). In [5], Donkin proves this result for arbitrary infinite field. Donkin shows by a symplectic version of the Carter-Lusztig Lemma that the bideterminants indexed by the symplectic standard tableaux of shape $\lambda$ form a basis of the $Sp(n, F)$-module, which is defined as the span of all bideterminants associated to the tableaux of shape $\lambda$. (This module need not be irreducible anymore.)

In this article, we are interested in a scheme defined as follows:
Let $R$ be a commutative ring with unity and let $n =2r$ be an even integer. Let $V$ be the scheme of $n \times n$ matrices $Y = (Y_{i j})$ over $\mbox{Spec} \: R$ such that 
\[
Y = -Y^T,  \quad Y_{ii} = 0 \: \text{ for }\: i = 1, \dotsc, n,  \quad Y^T JY = 0, \quad \mbox{and} \quad \mbox{char}_{(-JY)} (T) = T^n, \tag{1.1} \label{1.1}
\]
where $J$ is the $n \times n$ matrix
\[
\renewcommand{\arraystretch}{1.5}
J =
\left[
\begin{array}{ccc|ccc}
& & & \: 1 \: & & \\
& & & & \: \ddots \: & \\
& & & & & \: 1 \:\\
\hline
-1 & & & & & \\
& \ddots & & & & \\
& & -1 & & & 
\end{array}
\right].  \tag{1.2} \label{1.2}
\]

\noindent
The first two conditions in $(\ref{1.1})$ implies that $V$ is a closed subscheme of the scheme of $n \times n$ skew-symmetric matrices over $\mbox{Spec} \: R$. (The second condition is redundant if $2 \in R^{\times}$.)  If a tableau has no odd-sized rows, we call it an \emph{even-tableau} (and \emph{even-tableaux} for the plural form). In [4, Ch.6], De Concini and Procesi show that there is an $R$-basis of the ring $R[Y_{i j}] \big/ (Y + Y^T, Y_{11}, \dotsc, Y_{nn})$ indexed by the standard even-tableaux, where each such tableau corresponds to a product of pfaffians. On the other hand, there is a natural action of the symplectic group $Sp(n) := \{ g \: | \:g^T J g = J \}$ on $V$ by  $Y \cdot g = g^{T}Yg$. One aim of this article is to find an $R$-module basis for the coordinate ring of $V$, denoted by $R[V]$, in terms of tableaux when $R$ is given some suitable conditions. We omit the case $n=2$ since $R[V]$ is isomorphic to $R$ in this case given that $2 \in  R^{\times}$. In Section 3 we prove the following:

\begin{thm}  
Let $n=2r$ be an even integer with $n \geq 4$. When the scheme $V$ is defined over a field $F$ with $\mbox{char } F = 0$ or $\mbox{char } F > r$, there is an $F$-basis for the coordinate ring of $V$ consisting of products of pfaffians labelled by the symplectic standard even-tableaux.
\end{thm}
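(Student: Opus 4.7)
\noindent\emph{Proof plan.} Let $A = F[Y_{ij}]/(Y+Y^T, Y_{11}, \dots, Y_{nn})$ and let $I \subset A$ be the ideal generated by the entries of $Y^T J Y$ together with the non-leading coefficients of $\mbox{char}_{-JY}(t)$, so that $F[V] = A/I$. By De~Concini--Procesi, the set of products of pfaffians $P_\tau$ labelled by standard even-tableaux $\tau$ is an $F$-basis of $A$, and hence their images span $F[V]$. The overall strategy is to show that the sub-collection indexed by King's symplectic standard even-tableaux both spans and is linearly independent in $F[V]$.

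For spanning, I plan to develop a symplectic straightening law for pfaffian products, parallel to Donkin's symplectic Carter--Lusztig lemma at the level of bideterminants. Concretely, given a standard even-tableau $\tau$ failing King's symplectic condition at some row, the goal is to exhibit a relation in $A$ modulo $I$ that rewrites $P_\tau$ as an $F$-linear combination of $P_{\tau'}$'s for tableaux $\tau'$ strictly smaller in a suitable partial order and closer to symplectic standard. The quadratic relations $Y^T J Y = 0$ should enter via Laplace-type expansions of pfaffians, producing sums of pfaffian products whose indices pair under the symplectic form $J$; the trace relations coming from $\mbox{char}_{-JY}(t) = t^n$ should enter in higher-degree straightening steps, controlling the values of $\mbox{tr}((JY)^k)$ for $k \leq r$ whenever they obstruct the reduction. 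Iterating this straightening terminates and proves spanning.

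For linear independence, I would use the natural $Sp(n)$-action on $F[V]$ coming from $Y \cdot g = g^T Y g$. Under the hypothesis $\mbox{char } F = 0$ or $\mbox{char } F > r$, the weight space and Weyl module structure of $Sp(n)$-representations is well-behaved enough to compare $F[V]$ with an appropriate direct sum of symplectic Weyl modules $W_\lambda$ taken over even-partitions $\lambda$. Using the known fact (Berele, Donkin) that the bideterminants indexed by symplectic standard tableaux of shape $\lambda$ form a basis of $W_\lambda$, together with the relationship between bideterminants and pfaffians when $\lambda$ has only even rows, I expect to match weight spaces and conclude that the images in $F[V]$ of $\{P_\tau : \tau \mbox{ a symplectic standard even-tableau}\}$ are linearly independent.

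The main obstacle should be the spanning step above, specifically producing the explicit symplectic straightening law for pfaffian products using only the relations $Y^T J Y = 0$ and $\mbox{char}_{-JY}(t) = t^n$. The characteristic hypothesis $\mbox{char } F > r$ is expected to enter precisely here, via Newton's identities used to convert between the elementary symmetric functions (the coefficients of $\mbox{char}_{-JY}$) and the power sums $\mbox{tr}((JY)^k)$ for $k \leq r$, a conversion that requires inverting the integers $1, 2, \dots, r$.
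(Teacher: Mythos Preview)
Your overall two-part strategy (spanning via a symplectic straightening law, independence via the $Sp(n)$-action) matches the paper's, but several of the details you anticipate are off, and your independence argument has a gap in positive characteristic.

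For spanning, the paper derives the key relation directly rather than through Laplace expansions. Writing $w = \sum_{i,j} Y_{ij}\, e_i \wedge e_j$ and using a contraction operator $\Phi_{k,t}\colon \bigwedge^k E \to \bigwedge^{k-2t} E$ built from the symplectic form, one checks $\Phi_{k,1}(w^m) = 0$ using only $Y^T J Y = 0$ and $\mathrm{tr}(-JY) = 0$. Expanding $w^m$ in pfaffians and reading off coefficients gives, after dividing by $2^m m!\, t!$, an identity of the shape
\[
[P' \cup \Gamma,\ \overline{Q' \cup \Gamma}] \;=\; (-1)^{|\Gamma|} \sum_{\Gamma'} [P' \cup \Gamma',\ \overline{Q' \cup \Gamma'}],
\]
and this single relation drives the straightening in the paper's symplectic Carter--Lusztig argument (ordering tableaux by ``type''). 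So your guess about where the characteristic hypothesis enters is wrong: it is needed precisely to invert $2^m m!\, t!$ for $m, t \le r$, not to run Newton's identities. In fact the higher coefficients of $\mathrm{char}_{-JY}$ are never used; only the trace is, and the paper records this explicitly.

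For independence, the paper does \emph{not} decompose $F[V]$ into Weyl modules. It first checks, via an explicit closed embedding of the scheme of $r \times r$ (resp.\ $(r{-}1)\times(r{-}1)$) skew-symmetric matrices into $V$, that the ``canonical'' tableaux (entry $\overline{j}$ throughout column $j$) give linearly independent pfaffian products. Then, given any dependence relation among symplectic standard tableaux, it acts by specific one-parameter unipotent subgroups $I_n + \mu E_{\ast,\ast} \in Sp(n,F)$ and extracts the top $\mu$-coefficient; this replaces a non-canonical entry by a canonical one and inductively reduces to a relation among canonical tableaux. This proof works in \emph{every} characteristic. Your proposed route---matching $F[V]$ against a direct sum of symplectic Weyl modules---is exactly what the paper offers as an alternative over $\mathbb{C}$, where complete reducibility and highest-weight theory make it straightforward. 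In characteristic $p > r$ you have neither semisimplicity nor an a priori good filtration of $F[V]$, so to carry out your comparison you would first need to prove such a filtration exists, which is essentially the statement you are after. As written, then, your independence step is fine over $\mathbb{C}$ but has a genuine gap in the positive-characteristic case.
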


In Section 2 we develop some relations between pfaffians for later use. In Section 3 we define the symplectic standard even-tableaux and show that they can be used to label a basis of the coordinate ring of $V$. In Section 4 we prove a special case of Pappas and Rapoport's conjecture [8, Conj.5.2] when $n$ and $F$ meet some additional conditions.

\begin{conj}  
(Pappas and Rapoport, 2009) Let $F$ be a field with $\mbox{char } F \neq 2$ and let $n$ be an integer divisible by 4. Let $W$ be the scheme of $n \times n$ matrices $X$ over $\mbox{Spec} \: F$ with
\[
-JX^T J = X, \quad X^2 = 0,  \quad \mbox{and} \quad \mbox{char}_X (T) = T^n.   \tag{1.3} \label{1.3}
\]
Then $W$ is reduced.
\footnote{In fact, Pappas and Rapoport formulate a more general conjecture for any even $n$, depending on a partition of $n$ into two even parts; the version we have stated is the case $r = s$ in their notation.}
\end{conj}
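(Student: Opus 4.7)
My plan is to prove Conjecture 1.2 in the special case addressed by this paper, namely under the additional hypotheses $\mathrm{char}\,F = 0$ or $\mathrm{char}\,F > r$ (so Theorem 1.1 applies) and $n \equiv 0 \pmod 4$, by upgrading reducedness of $W$ to the stronger statement that $F[V]$ is an integral domain.

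First I would identify $W$ with $V$ as schemes over any $F$ with $\mathrm{char}\,F \neq 2$, via the substitution $X = JY$ (equivalently $Y = -JX$, since $J^{-1} = -J$). A direct calculation using $J^T = -J$ and $J^2 = -I$ shows that the condition $-JX^TJ = X$ is equivalent to $Y = -Y^T$; that $X^2 = JYJY = J(YJY)$ vanishes if and only if $YJY = 0$, which for skew $Y$ rewrites as $Y^TJY = 0$; and that the two characteristic polynomial conditions correspond under $X = JY$. This reduces the conjecture to showing $F[V]$ is reduced.

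Next I would invoke Theorem 1.1, which supplies an explicit $F$-basis of $F[V]$ by products of pfaffians indexed by symplectic standard even-tableaux, thereby fixing the graded Hilbert function of $F[V]$. To upgrade reducedness to integrality, I would identify $F[V]$ with the coordinate ring of an irreducible $Sp(n)$-orbit closure. Geometrically, each $Y \in V$ is a skew form whose image is $J$-isotropic in $F^n$; when $r$ is even, a generic such $Y$ has maximal rank $r$ with Lagrangian image, and can be written $Y = B\Omega B^T$ for an $n \times r$ matrix $B$ with $B^TJB = 0$ and a fixed non-degenerate skew form $\Omega$ on $F^r$. The existence of such an $\Omega$ uses crucially that $r$ is even, i.e., that $n \equiv 0 \pmod 4$. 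This parameterization by the (irreducible) isotropic Stiefel variety realizes $V_{\mathrm{red}}$ as an irreducible orbit closure, so $F[V_{\mathrm{red}}]$ is a domain. Matching the Hilbert function of $F[V_{\mathrm{red}}]$ (from the geometric description) with that of $F[V]$ (from the pfaffian basis) would force the canonical surjection $F[V] \twoheadrightarrow F[V_{\mathrm{red}}]$ to be an isomorphism, so $F[V]$ is itself a domain.

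The main obstacle is this Hilbert function comparison: one must match the count of symplectic standard even-tableaux of each size against the dimension of each graded piece of $F[V_{\mathrm{red}}]$, computable via $Sp(n)$-representation theory and the orbit structure. The parity of $r$ is decisive here — for $r$ odd, no non-degenerate skew form on $F^r$ exists, the maximum rank of $Y \in V$ drops to $r-1$, and the parameterization above breaks down, so the combinatorial and geometric counts no longer align in the way required to force integrality of $F[V]$.
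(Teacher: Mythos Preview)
Your reduction of $W$ to $V$ via $X \mapsto JY$ is correct and matches the paper. However, your route to integrality of $F[V]$ diverges from the paper's and, as you yourself flag, is incomplete at the crucial step.

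Your plan is to exhibit $V_{\mathrm{red}}$ as an irreducible orbit closure via the parameterization $Y = B\Omega B^{T}$ and then to force $F[V] \cong F[V_{\mathrm{red}}]$ by a Hilbert function match. The gap is exactly the one you name: you give no independent computation of $\dim_F F[V_{\mathrm{red}}]_m$. Knowing the symplectic standard even-tableaux count tells you $\dim_F F[V]_m$, but to compare you would need to compute the $Sp(n)$-module structure of $F[V_{\mathrm{red}}]$ directly from the orbit description, and that is a substantial task in its own right (normality of the orbit closure, a Borel--Weil type argument, or an explicit resolution). Without it, the surjection $F[V] \twoheadrightarrow F[V_{\mathrm{red}}]$ cannot be shown to be injective, and the argument stalls.

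The paper avoids this comparison entirely by a localization trick. Let $f = [\overline{1},\ldots,\overline{r}]^2$. Because the one-row tableau $\overline{1}\,\overline{2}\cdots\overline{r}$ is symplectic standard and prepending it to any symplectic standard even-tableau again yields one, multiplication by $[\overline{1},\ldots,\overline{r}]$ permutes basis elements injectively; hence $f$ is a non-zero-divisor in $F[V]$ (this is where $r$ even is used, since otherwise $[\overline{1},\ldots,\overline{r}]=0$). Then an explicit Grassmannian change of chart identifies $\mathrm{Spec}\,F[V]_f$ with an open subscheme of $\mathbb{A}^{r^2}$, so $F[V]_f$ is a domain. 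Since $f$ is a non-zero-divisor, $F[V] \hookrightarrow F[V]_f$, and $F[V]$ is a domain.

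The contrast is instructive: your approach asks for global representation-theoretic information about $V_{\mathrm{red}}$, while the paper only needs local information on the open locus where one specific pfaffian is invertible, together with the single combinatorial fact (extracted from Theorem 1.1) that this pfaffian is a non-zero-divisor.
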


\noindent
Assume that $n$ is divisible by 4. Taking $R = F$, there is an isomorphism $W \rightarrow V$ given by $X \mapsto JX$. When $F$ is a field with $\mbox{char } F = 0$ or $\mbox{char } F > r$, we prove that the coordinate ring $F[V]$ is in fact an integral domain (Theorem 4.3) so the same holds true for $W$ as well.

\begin{thm}  
Let $n$ be a multiple of 4 and $F$ be a field with $\mbox{char } F = 0$ or $\mbox{char } F > r$. Let $W$ be the scheme defined as in Conjecture 1.2. Then the coordinate ring of $W$ is an integral domain.
\end{thm}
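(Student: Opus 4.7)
The plan is to deduce Theorem 1.3 from Theorem 1.1 via a leading-term argument on the tableau basis. The isomorphism $W \to V$, $X \mapsto JX$, mentioned just before the statement and checked by direct computation using $J^2 = -I$, reduces the claim to showing that $F[V]$ is an integral domain when $4 \mid n$ and $\mathrm{char}\,F = 0$ or $\mathrm{char}\,F > r$.

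By Theorem 1.1, $F[V]$ has an $F$-basis $\{p_T\}$ indexed by symplectic standard even-tableaux $T$, where $p_T$ is the product of pfaffians read off the columns of $T$. I would fix a total order $\prec$ on such tableaux, refining the partial order by shape (for instance componentwise order on the partition of column lengths) with ties broken lexicographically on the entries. For nonzero $f = \sum_T c_T p_T \in F[V]$, define $\mathrm{lt}(f)$ to be the $\prec$-maximal tableau with $c_T \ne 0$.

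The central combinatorial step is to show that for any two symplectic standard even-tableaux $T_1, T_2$, the product $p_{T_1} p_{T_2}$, which as a product of pfaffians is \emph{a priori} indexed by the multiset union of the columns of $T_1$ and $T_2$, expands in the basis as $c \, p_U + (\text{terms} \prec p_U)$ with $c \ne 0$, where $U$ is the canonical symplectic standard even-tableau of shape $\mathrm{shape}(T_1) + \mathrm{shape}(T_2)$ obtained by sorting the combined columns by length. Evenness of the rows of $U$ is automatic; the King symplectic condition on $U$ and the claim that all other terms in the straightened expansion are $\prec p_U$ require a careful application of the pfaffian relations developed in Section 2 together with symplectic Garnir-type rewrites that respect the King row conditions. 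Once this leading-term formula is in hand, one has $\mathrm{lt}(fg) = \mathrm{lt}(f)\cdot \mathrm{lt}(g)$, and linear independence of the basis yields $fg \ne 0$ whenever $f, g \ne 0$.

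The main obstacle is verifying the leading-term formula in the symplectic setting. The straightening algorithm for products of pfaffians is classical in the ordinary (non-symplectic) case following De~Concini--Procesi, but adapting it to the King standard context requires additional Garnir-type relations on top of the usual pfaffian Pl\"ucker relations, and checking that every such rewrite strictly decreases the leading tableau in the chosen order is delicate. This is presumably where the hypothesis $4 \mid n$ enters the argument specifically (beyond the role of the characteristic hypothesis in Theorem 1.1), since it ensures that the column-merge of any two standard even-tableaux admissible for the relevant shape constraints remains itself symplectic standard, avoiding parity obstructions that would otherwise produce competing leading terms and break the multiplicativity of $\mathrm{lt}$.
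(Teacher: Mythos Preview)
Your approach differs from the paper's and has a genuine gap at its central step.

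The paper's proof is in Section~4 and is geometric rather than combinatorial. With the basis of Theorem~3.11 in hand, Lemma~4.1 shows that the pfaffian $g:=[\overline{1},\overline{2},\dotsc,\overline{r}]$ is a non-zero-divisor in $F[V]$, using only the trivial fact that adjoining the row $\overline{1},\dotsc,\overline{r}$ atop any symplectic standard even-tableau yields another such tableau. Lemma~4.2 then identifies $\mathrm{Spec}\,F[V]_f$ (where $f=g^2$) with an open subscheme of $\mathbb{A}^{r^2}$ via an explicit change of affine charts on $Gr(n,2n)$. Hence $F[V]_f$ is a domain, and since $f$ is a non-zero-divisor we have $F[V]\hookrightarrow F[V]_f$, so $F[V]$ is a domain as well. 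The hypothesis $4\mid n$ enters exactly here, and for concrete reasons: when $r$ is odd the pfaffian $[\overline{1},\dotsc,\overline{r}]$ vanishes identically, so there is nothing to localize at; and the trace calculation at the end of Lemma~4.2 uses that the diagonal cofactors of an $r\times r$ skew-symmetric matrix are determinants of odd-sized skew matrices, hence zero, which again requires $r$ even.

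The gap in your proposal is the unproven leading-term multiplicativity, which you yourself flag as the main obstacle but do not resolve. The symplectic straightening in Proposition~3.7 (via relation~(2.5)) rewrites a standard-but-not-symplectic-standard tableau as a combination of tableaux of strictly \emph{larger} type in the $\trianglelefteq$ order, while the De~Concini--Procesi straightening of Theorem~3.3(2) preserves type; there is no evident single total order on symplectic standard even-tableaux under which both rewriting procedures strictly decrease, let alone one making the leading term of $[T_1][T_2]$ equal to the naively merged tableau. Your guess about the role of $4\mid n$ is also off: nothing in the tableau combinatorics or the straightening relations of Sections~2--3 depends on the parity of $r$, so if your argument went through it would go through for every even $n\ge4$, which is not what the paper claims. (A minor point of convention: in this paper pfaffians are indexed by the \emph{rows} of $T$, not the columns, so $[T_1][T_2]$ corresponds to concatenating rows, and the shape of the putative leading tableau is not $\mathrm{shape}(T_1)+\mathrm{shape}(T_2)$.)
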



\section{The Relations}

Let $n = 2r$ be an even integer with $n \geq 4$ and $F$ be a field. We denote by $\bf{n}$ the set $\{1, 2, \dotsc, n\}$ and, for $1 \leq i \leq r$, we often use the symbol $\overline{i}$ in place of $r+i$. That is,
\[ 
\mathbf{n} = \{1, 2, \dotsc, r, \overline{1}, \overline{2}, \dotsc, \overline{r}\}. 
\]

\noindent
Let $C$ be any $F$-algebra. Let $E$ be a free $C$-module of rank $n$ with a basis $\{ e_1, \dotsc, e_r, e_{\overline{1}}, \dotsc, e_{\overline{r}}\}$. We endow $E$ with a nondegenerate antisymmetric bilinear form $\langle \:,\: \rangle$ such that
\[ 
\langle e_i, e_{\overline{j}} \rangle = \delta_{ij} = - \langle e_{\overline{j}}, e_i \rangle, \quad \langle e_i, e_j \rangle = 0, \quad \langle e_{\overline{i}}, e_{\overline{j}} \rangle = 0
\]
for any $i, j \in \{1, \dotsc, r\}.$ In other words, $\langle \:,\: \rangle$ is represented by the matrix $J$, $( \ref{1.2} )$, with respect to the basis  $\{ e_1, \dotsc, e_r, e_{\overline{1}}, \dotsc, e_{\overline{r}}\}$.

For each $1 \leq k \leq r$ and $t \leq \lfloor k/2 \rfloor $, we introduce a homomorphism
\[ \Phi_{k, t}: \bigwedge^k E \rightarrow \bigwedge^{k-2t} E \]
defined by
\[ \Phi_{k, t} (v_1 \wedge v_2 \wedge \cdots \wedge v_k) = \sum_\sigma sgn(\sigma) \langle v_{\sigma(1)}, v_{\sigma(2)}\rangle \cdots \langle v_{\sigma(2t-1)}, v_{\sigma(2t)} \rangle \: v_{\sigma(2t+1)} \wedge \cdots \wedge v_{\sigma(k)}, \]
where $\sigma$ runs through all permutations of $\{1, 2, \dotsc, k\}$ such that 
\[ \sigma(1) < \sigma(2), \quad  \dotsc, \quad \sigma(2t-1) < \sigma(2t), \quad \mbox{and} \quad \sigma(2t+1) < \sigma(2t+2) < \cdots < \sigma(k). \]

\noindent
It is easy to see that $\Phi_{k, t}$ is well-defined, and in fact, $\Phi_{k, t}$ can be obtained recursively:
\[
\Phi_{k, t} = \Phi_{k-2t+2, 1} \circ \cdots \circ \Phi_{k-2, 1} \circ \Phi_{k, 1}. \tag{2.1} \label{2.1}
\]
$\Phi_{k, t}$ is the main tool to find relations between pfaffians in the coordinate ring  of $V$, cf. [3, p.5]. 
One goal of this section is to achieve the relation $(\ref{2.5})$ below, which is the analog of the equation given in [3, Prop.1.8] if we substitute pfaffians for minors (with fixed column indices). Some of the proofs and definitions in this section closely follow [3, pp.5-8].\\

\begin{lem}  
For any $n \times n$ matrix $Y$ in $V(C)$, we have
$$ 2 \sum_{h=1}^r Y_{h \overline{h}} = 0.$$
\end{lem}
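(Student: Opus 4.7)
The plan is to extract the identity from the condition $\operatorname{char}_{(-JY)}(T)=T^n$ by comparing the coefficient of $T^{n-1}$: since for any $n\times n$ matrix $M$ one has $\operatorname{char}_M(T)=T^n-(\operatorname{tr}M)T^{n-1}+\cdots$, the hypothesis forces $\operatorname{tr}(-JY)=0$, hence $\operatorname{tr}(JY)=0$.

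Next I would compute $\operatorname{tr}(JY)$ explicitly from the block description of $J$ in (1.2). Writing $Y$ in $r\times r$ blocks as
\[
Y=\begin{pmatrix} A & B \\ -B^{T} & D\end{pmatrix},
\]
the skew-symmetry $Y=-Y^{T}$ forces $A$ and $D$ to be skew-symmetric, while the off-diagonal blocks are related as shown. A direct multiplication with $J=\bigl(\begin{smallmatrix}0 & I_r\\ -I_r & 0\end{smallmatrix}\bigr)$ yields
\[
JY=\begin{pmatrix} -B^{T} & D\\ -A & -B\end{pmatrix},
\]
so $\operatorname{tr}(JY)=-\operatorname{tr}(B^{T})-\operatorname{tr}(B)=-2\operatorname{tr}(B)$. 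Since $\operatorname{tr}(B)=\sum_{h=1}^{r}B_{hh}=\sum_{h=1}^{r}Y_{h\,\overline{h}}$, combining with the vanishing of $\operatorname{tr}(JY)$ gives exactly $2\sum_{h=1}^{r}Y_{h\,\overline{h}}=0$.

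There is no substantive obstacle here: the lemma is a tautology once one unpacks that the characteristic polynomial being $T^n$ encodes, at the first nontrivial coefficient, the vanishing of the trace of $-JY$. The only mild care needed is to avoid dividing by $2$ (the statement is given without that division), since $C$ is an arbitrary $F$-algebra and the lemma is later applied in situations where $2$ may fail to be invertible; the proof above respects that because the factor of $2$ arises naturally from $\operatorname{tr}(B^{T})+\operatorname{tr}(B)$ and is not introduced by a cancellation.
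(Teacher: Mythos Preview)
Your proof is correct and follows essentially the same approach as the paper: both extract $\operatorname{tr}(-JY)=0$ from the characteristic-polynomial condition and then compute the trace explicitly, the paper entry-by-entry and you via block matrices. The difference is purely cosmetic.
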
 

\begin{proof}
$-JY$ has trace 0 since $\mbox{char}_{(-JY)} (T) = T^n$. Then
\[
0 = \mbox{tr}(-JY) = - \sum_{h=1}^r Y_{\overline{h} h} +  \sum_{h=1}^r Y_{h \overline{h}} = 
- \sum_{h=1}^r (-Y_{h \overline{h}}) +  \sum_{h=1}^r Y_{h \overline{h}} =
2 \sum_{h=1}^r Y_{h \overline{h}} \: .
\]
\end{proof}

\begin{lem}  
Assume $k=2m$ is an even number and $Y \in V(C)$. Let
$$ w := \sum_{i, j \in \mathbf{n}} Y_{ij} \: e_i \wedge e_j .$$ 
Then  $\Phi_{k, t}(w^m) = 0$ for any $t \leq m$.  Here the $m$-th power is taken in the exterior algebra $\bigwedge E$.\\
\end{lem}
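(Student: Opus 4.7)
The plan is to reduce to the case $t = 1$ via the recursive identity $(\ref{2.1})$, and then to expand $\Phi_{2m,1}(w^m)$ into a signed sum in which every term vanishes, either by Lemma 2.1 or by the defining relation $Y^T J Y = 0$ of $V$. Since $(\ref{2.1})$ writes $\Phi_{k,t}$ as a composition whose rightmost factor is $\Phi_{k,1}$, once I establish $\Phi_{2m,1}(w^m) = 0$ the general statement $\Phi_{2m,t}(w^m) = 0$ follows for every $1 \leq t \leq m$.

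To carry out the key step, I would expand
\[
w^m = \sum_{(i_1, j_1), \dotsc, (i_m, j_m)} Y_{i_1 j_1} \cdots Y_{i_m j_m} \, e_{i_1} \wedge e_{j_1} \wedge \cdots \wedge e_{i_m} \wedge e_{j_m}
\]
and apply $\Phi_{2m,1}$ termwise. By its definition, $\Phi_{2m,1}$ produces a signed sum indexed by pairs of positions $1 \leq a < b \leq 2m$ in which the bracket $\langle \cdot, \cdot \rangle$ associated to the two chosen positions is pulled out and the remaining $2m - 2$ basis vectors are wedged back together. I would separate these pairs into a \emph{same-factor} case, where $(a, b) = (2l-1, 2l)$ for some $l$, and a \emph{different-factor} case, where $a \in \{2l-1, 2l\}$ and $b \in \{2l'-1, 2l'\}$ with $l < l'$.

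In the same-factor case the surviving wedge and sign are independent of $(i_l, j_l)$, so the inner sum $\sum_{i, j \in \mathbf{n}} Y_{ij} \langle e_i, e_j \rangle$ factors out and equals $2\sum_h Y_{h \overline{h}}$, which vanishes by Lemma 2.1. In the different-factor case there are four sub-cases, depending on whether $a$ and $b$ are the first or second position in their respective pairs; in each sub-case the inner double sum over the two chosen indices (with all other indices held fixed) evaluates to a matrix entry of one of $Y^T J Y$, $Y J Y$, $Y^T J Y^T$, or $Y J Y^T$, and all four matrices vanish because $Y^T J Y = 0$ on $V$ and $Y = -Y^T$. Hence every summand is zero and $\Phi_{2m,1}(w^m) = 0$. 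The main obstacle I expect is the bookkeeping required to isolate each inner sum cleanly: one must verify that, for fixed values of the other indices, the surviving wedge and the sign are genuinely independent of the two summation variables, so that the inner sum really can be pulled out and recognized as a matrix entry. Once this independence is confirmed the four sub-cases are symmetric and reduce to the same two observations, $Y^T J Y = 0$ and $Y = -Y^T$.
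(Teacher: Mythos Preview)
Your proposal is correct and uses the same two ingredients as the paper: the reduction to $t=1$ via $(\ref{2.1})$, then Lemma 2.1 for the same-factor contractions and the relation $Y^TJY=0$ (together with $Y=-Y^T$) for the four cross-factor contractions.

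The only organizational difference is in how the general $m$ is handled. The paper first treats $m=1$ and $m=2$ explicitly (the $m=2$ case is exactly your six-term decomposition into two same-factor and four different-factor contributions), and then invokes a Leibniz-type identity
\[
\Phi_{2m,1}(w^m)=\binom{m}{2}\,\Phi_{4,1}(w^2)\wedge w^{m-2}+(-m^2+2m)\,\Phi_{2,1}(w)\cdot w^{m-1}
\]
to reduce arbitrary $m$ to those two base cases. You instead carry out the same case analysis directly on $w^m$ for all $m$ at once. Your route avoids having to state and check the Leibniz identity, at the cost of slightly heavier index bookkeeping; the paper's route isolates all the actual computation in the smallest nontrivial case $m=2$. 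Both arguments are short, and your acknowledged ``main obstacle''---that the sign and the residual wedge depend only on the chosen positions $(a,b)$, not on the values of the two contracted indices---is immediate from the definition of $\Phi_{k,1}$, since $\mathrm{sgn}(\sigma)$ and the ordered list $\sigma(3)<\cdots<\sigma(k)$ are determined by $\{a,b\}$ alone.
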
 

\begin{proof}
It suffices to show $\Phi_{k, 1}(w^m) = 0$ by $( \ref{2.1} )$.

\noindent \textit{Case}
 $m = 1$: By Lemma 2.1,
\begin{flalign*}
\Phi_{2,1} (w)
&= \Phi_{2,1} (\sum_{i, j \in \mathbf{n}} Y_{ij} \: e_i \wedge e_j )\\
&= \sum_{i, j \in \mathbf{n}} Y_{ij} \langle e_i, e_j \rangle\\
&= \sum_{h=1}^{r} Y_{h \overline{h}} \langle e_h, e_{\overline{h}} \rangle +  \sum_{h=1}^{r} Y_{\overline{h} h} \langle e_{\overline{h}}, e_h \rangle\\
&= 2\sum_{h=1}^{r} Y_{h \overline{h}}\\
&= 0.
\end{flalign*}

\noindent \textit{Case}
 $m = 2$ :
\begin{flalign*}
\Phi_{4,1} (w^2)
&= \Phi_{4,1} \bigg( (\sum_{i, j \in \mathbf{n}} Y_{ij} e_i \wedge e_j )(\sum_{i', j' \in \mathbf{n}} Y_{i'j'} e_{i'} \wedge e_{j'} ) \bigg)\\
&= \sum_{i, j, i', j' \in \mathbf{n}} Y_{ij} Y_{i'j'} \Phi_{4, 1} (e_i \wedge e_j \wedge e_{i'} \wedge e_{j'}). \tag{2.2} \label{2.2}
\end{flalign*}

\noindent
By definition,
\begin{flalign*}
&\Phi_{4, 1} (e_i \wedge e_j \wedge e_{i'} \wedge e_{j'}) \\
& \hspace{5mm} = \langle e_i, e_j \rangle e_{i'} \wedge e_{j'}
 + \langle e_{i'}, e_{j'} \rangle e_{i} \wedge e_{j}
- \langle e_i, e_{i'} \rangle e_{j} \wedge e_{j'} 
- \langle e_j, e_{j'} \rangle e_{i} \wedge e_{i'}
+ \langle e_i, e_{j'} \rangle e_{j} \wedge e_{i'}
+ \langle e_j, e_{i'} \rangle e_{i} \wedge e_{j'}.
\end{flalign*}

\noindent
Then we compute the sum $( \ref{2.2} )$ with each of these six terms. It turns out that each sum is equal to zero, so we get $\Phi_{4,1} (w^2) = 0$.  We use Lemma 2.1 for the first one and the second one. The first sum is

\begin{flalign*}
 \sum_{i, j, i', j'} Y_{ij} Y_{i'j'} \langle e_i, e_j \rangle e_{i'} \wedge e_{j'} 
&= \sum_{i', j'} \bigg( \sum_{i, j} Y_{ij} \langle e_i, e_j \rangle \bigg) Y_{i'j'} \: e_{i'} \wedge e_{j'}\\
&= \sum_{i', j'} \bigg( 2\sum_{h=1}^{r} Y_{h \overline{h}} \bigg)  Y_{i'j'} \: e_{i'} \wedge e_{j'}\\
&= 0,
\end{flalign*}

\noindent
and the second sum can similary be shown to be zero.

For the third one, we use the condition $Y^T JY = 0$:
\begin{flalign*}
\sum_{i, j, i', j'} Y_{ij} Y_{i'j'} \langle e_i, e_{i'} \rangle e_{j} \wedge e_{j'} 
&= \sum_{j, j'}\bigg(\sum_{i, i'} Y_{ij} \langle e_i, e_{i'} \rangle Y_{i'j'} \bigg)  e_{j} \wedge e_{j'}\\
&= \sum_{j, j'} \bigg( \sum_{i, i'} (Y^{T})_{ji} \langle e_i, e_{i'} \rangle Y_{i'j'} \bigg)  e_{j} \wedge e_{j'}\\
&= \sum_{j, j'} \hspace{2mm} (Y^{T} J Y)_{jj'} \hspace{2mm} e_{j} \wedge e_{j'}\\
&= 0.
\end{flalign*}

\noindent
Since $Y$ is a skew-symmetric matrix, i.e. $Y = -Y^T$, we also have  
$0 = Y^T JY = YJY^T = Y^T J Y^T = YJY$,
which we make use of to show that the fourth, the fifth, and the sixth sums are again equal to zero. The computations are analogous to the third one and listed below:

\begin{flalign*}
\sum_{i, j, i', j'} Y_{ij} Y_{i'j'} \langle e_j, e_{j'} \rangle e_{i} \wedge e_{i'}
&= \sum_{i, i'} \bigg( \sum_{j, j'} Y_{ij} \langle e_j, e_{j'} \rangle Y_{i'j'} \bigg) e_{i} \wedge e_{i'}\\
&= \sum_{i, i'} \bigg( \sum_{j, j'} Y_{ij} \langle e_j, e_{j'} \rangle (Y^{T})_{j'i'} \bigg)  e_{i} \wedge e_{i'}\\
&= \sum_{i, i'} \hspace{2mm} (YJY^T)_{ii'} \:  e_{i} \wedge e_{i'}\\
&= 0,\\\\
\sum_{i, j, i', j'} Y_{ij} Y_{i'j'} \langle e_i, e_{j'} \rangle e_{j} \wedge e_{i'}
&= \sum_{j, i'} \bigg( \sum_{i, j'} Y_{ij} \langle e_i, e_{j'} \rangle Y_{i'j'} \bigg) e_{j} \wedge e_{i'}\\
&= \sum_{j, i'} \bigg( \sum_{i, j'} (Y^T)_{ji} \langle e_i, e_{j'} \rangle (Y^T)_{j'i'} \bigg) e_{j} \wedge e_{i'}\\
&= \sum_{j, i'} \hspace{2mm} (Y^T J Y^T)_{ji'}  \:  e_{j} \wedge e_{i'}\\
&= 0,\\\\
\sum_{i, j, i', j'} Y_{ij} Y_{i'j'} \langle e_j, e_{i'} \rangle e_{i} \wedge e_{j'}
&= \sum_{i, j'} \bigg( \sum_{j, i'} Y_{ij} \langle e_j, e_{i'} \rangle Y_{i'j'} \bigg)  e_{i} \wedge e_{j'}\\
&= \sum_{i, j'} \hspace{2mm} (YJY)_{ij'} \: e_{i} \wedge e_{j'}\\
&= 0.\\
\end{flalign*}

\noindent
\textit{Case} $m \geq 2$ :
The general case can be achieved from the first two cases as
\begin{flalign*}
\Phi_{k, 1}(w^m)
&= \binom{m}{2} \bigg\{ \Phi_{4, 1} (w^2) \wedge w^{m-2} -2 \cdot \Phi_{2, 1} (w) \cdot w^{m-1} \bigg\} 
+ m \cdot \Phi_{2, 1} (w) \cdot w^{m-1}\\
&= \binom{m}{2} \Phi_{4, 1} (w^2) \wedge w^{m-2} + (-m^2 +2m) \cdot \Phi_{2, 1} (w) \cdot w^{m-1}\\
&= 0.
\end{flalign*}
\end{proof}

\begin{notn}  
When  $I = \{i_1,  i_2, \dotsc, i_{l}\}$ is a subset of $\mathbf{n}$ with $i_1 < i_2 < \cdots < i_{l}$, let $e^I$ denote the vector
$$ e^I := e_{i_1} \wedge e_{i_2} \wedge \cdots \wedge e_{i_l} \, .$$

\noindent
We define a basis for $\bigwedge^{k} E$ as in [2, p.5]. Let $P = \{ p_1, \dotsc, p_s \}$ and $Q = \{ q_1, \dotsc, q_{k-s} \}$ be two subsets of $ \{1, \dotsc, r \}$ with $|P| + |Q| = k$. Let $\overline{Q}$ denote the set $\{ \overline{q_1}, \dotsc, \overline{q_{k-s}}\}$. A vector $e^{P, \overline{Q}} \in \bigwedge^{k} E$ is defined as follows:

\noindent
(A) If $P \cap Q = \emptyset$, then $e^{P, \overline{Q}} := e^P \wedge e^{\overline{Q}}$.\\
(B) If $P \cap Q = \Gamma$ where $\Gamma = \{\gamma_1 < \cdots < \gamma_\lambda \}$, then $e^{P, \overline{Q}} := e_{\gamma_1} \wedge e_{\overline{\gamma_1}}  \wedge \cdots \wedge e_{\gamma_{\lambda}} \wedge e_{\overline{\gamma_{\lambda}}} \wedge e^{P \smallsetminus \Gamma} \wedge e^{\overline{Q \smallsetminus \Gamma}}$.\\

\noindent
Clearly $\{ e^{P, \overline{Q}} \: | \: P, Q \subset \{1, \dotsc, r\}, \: |P| + |Q| = k \}$ forms a basis of $\bigwedge^k E$.\\
\end{notn}

\begin{ex}  
Let $P = \{ 1, 2, 5 \}$ and $Q = \{2, 3, 4\}$. Then
$$ e^{P, \overline{Q}} = e^{ \{ 1, 2, 5 \}, \{ \overline{2}, \overline{3}, \overline{4} \} } = e_2 \wedge e_{\overline{2}} \wedge e_1 \wedge e_5 \wedge e_{\overline{3}} \wedge e_{\overline{4}} \,.$$
\end{ex}

\begin{lem}  
\begin{enumerate}[(1)]
\item If $t > | P \cap Q|$ then 
\[ 
\Phi_{k, t}(e^{P, \overline{Q}}) = 0.
\]

\item If $t \leq | P \cap Q |$ then
\[
 \Phi_{k, t}(e^{P, \overline{Q}}) = t! \sum_{\Gamma_t} e^{P \smallsetminus {\Gamma_t}, \: \overline{ Q \smallsetminus {\Gamma_t}}}
\]
\noindent where $\Gamma_t$ runs through all size $t$ subsets of $\Gamma = P \cap Q$.
\end{enumerate}
\end{lem}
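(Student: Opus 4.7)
The plan is to prove both parts simultaneously by induction on $t$, using the recursion $\Phi_{k, t} = \Phi_{k-2t+2, 1} \circ \Phi_{k, t-1}$ that follows from $(\ref{2.1})$. Everything thus reduces to an explicit computation of the single-pairing operator $\Phi_{*, 1}$, and the factorial $t!$ in part (2) will appear naturally from the number of orderings in which a size-$t$ subset of $\Gamma$ can be built one element at a time.

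For the base case $t = 1$, I work directly from the definition of $\Phi_{k,1}$. The form $\langle\cdot,\cdot\rangle$ is supported only on pairs $\{e_i, e_{\overline{i}}\}$, so a permutation $\sigma$ contributes nontrivially only when $\{v_{\sigma(1)}, v_{\sigma(2)}\} = \{e_\gamma, e_{\overline{\gamma}}\}$ for some index $\gamma$. Among the factors of $e^{P, \overline Q}$ such a matched pair occurs exactly when $\gamma \in P \cap Q = \Gamma$, because an element of $P \smallsetminus \Gamma$ has no barred counterpart present (it is not in $Q$) and symmetrically for $Q \smallsetminus \Gamma$. If $\Gamma = \emptyset$ this already shows $\Phi_{k,1}(e^{P, \overline Q}) = 0$, the $t = 1$ case of part (1). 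Otherwise, for each $\gamma \in \Gamma$, Notation 2.3 places $e_\gamma$ and $e_{\overline{\gamma}}$ at the consecutive positions $(2s-1, 2s)$, where $s$ is the rank of $\gamma$ in $\Gamma$; the unique admissible $\sigma$ has $(\sigma(1), \sigma(2)) = (2s-1, 2s)$ with sign $(-1)^{(2s-1) + 2s - 1} = +1$ and pairing value $\langle e_\gamma, e_{\overline{\gamma}} \rangle = 1$, and deleting these two slots leaves the canonical form of $e^{P \smallsetminus \{\gamma\}, \overline{Q \smallsetminus \{\gamma\}}}$. Summing over $\gamma \in \Gamma$ yields the $t = 1$ case of part (2).

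For the inductive step of (2), assume
\[
\Phi_{k, t-1}(e^{P, \overline Q}) = (t-1)! \sum_{\Gamma_{t-1}} e^{P \smallsetminus \Gamma_{t-1}, \overline{Q \smallsetminus \Gamma_{t-1}}}
\]
and apply $\Phi_{k-2t+2, 1}$ termwise. Since the relevant intersection in each summand is $\Gamma \smallsetminus \Gamma_{t-1}$, the base case gives
\[
\Phi_{k-2t+2, 1}\bigl(e^{P \smallsetminus \Gamma_{t-1}, \overline{Q \smallsetminus \Gamma_{t-1}}}\bigr) = \sum_{\gamma \in \Gamma \smallsetminus \Gamma_{t-1}} e^{P \smallsetminus (\Gamma_{t-1} \cup \{\gamma\}), \overline{Q \smallsetminus (\Gamma_{t-1} \cup \{\gamma\})}}.
\]
Each size-$t$ subset $\Gamma_t \subset \Gamma$ arises as $\Gamma_{t-1} \sqcup \{\gamma\}$ in exactly $t$ ways, producing the coefficient $(t-1)! \cdot t = t!$. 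For part (1) with $t > |\Gamma|$, iterating the same argument eventually reaches $e^{P \smallsetminus \Gamma, \overline{Q \smallsetminus \Gamma}}$, which has empty intersection, and any further application of $\Phi_{*, 1}$ annihilates it by the $\Gamma = \emptyset$ observation from the base case.

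The one place where something subtle might happen is the sign in the base case, but because the canonical form of $e^{P, \overline Q}$ forces $e_\gamma$ and $e_{\overline{\gamma}}$ into the adjacent positions $(2s-1, 2s)$, the sign automatically works out to $+1$ and no further bookkeeping is needed; the rest of the proof is purely a count of how many $(\Gamma_{t-1}, \gamma)$ produce each $\Gamma_t$.
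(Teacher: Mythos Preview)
Your proof is correct and is precisely the argument the paper leaves implicit: the paper's own proof merely says the assertions ``are easily seen from the definitions'' and cites [3, Lemma 1.6], while you have spelled out exactly those details, reducing via the recursion $(\ref{2.1})$ to the case $t=1$ and exploiting that Notation~2.3 places each pair $e_\gamma,\,e_{\overline\gamma}$ in adjacent slots so that the sign is $+1$. Nothing further is needed.
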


\begin{proof}
Both assertions are easily seen from the definitions; see [3, Lemma 1.6].
\end{proof}

As in [3, p.6], we follow the convention of putting $e^{P \smallsetminus {\Gamma_t},\: \overline{Q \smallsetminus {\Gamma_t}}} = 0$ when $\Gamma_t \not\subset P \cap Q $. Hence, we can write
\[
 \Phi_{k, t}(e^{P, \overline{Q}}) = t! \sum_{ |\Gamma_t| = t} e^{P \smallsetminus {\Gamma_t}, \: \overline{ Q \smallsetminus {\Gamma_t}}} \, .
\]


Let $A = (A_{ij})$ be a $2s \times 2s$ skew-symmetric matrix. The pfaffian of $A$ is a polynomial $Pf (A)$ in the entries of A defined by
\[
Pf (A) = \sum_{\sigma} sgn(\sigma) \prod_{i=1}^{s} A_{\sigma(2i-1) \sigma(2i)},
\]
where the summation is over all permutations $\sigma$ of $\{ 1, 2, \dotsc, 2s \}$ such that
\[
\sigma(1) < \sigma(3) < \cdots < \sigma(2s-1) \quad \mbox{and} \quad \sigma(2i-1) < \sigma(2i) \mbox{ for } i = 1, 2, \dotsc, s.
\]
It's well known that $Pf(A)^2 = \mbox{det} (A)$. If $A$ has entries in $C$ and $\{e_1, e_2, \dotsc, e_{2s}\}$ is the standard basis of $C^{2s}$, then $Pf(A)$ satisfies
\[
 \big( \sum_{i < j} A_{ij} \; e_i \wedge e_j \big)^s = s! \: Pf(A) \: e_1 \wedge e_2 \wedge \cdots \wedge e_{2s};
\]
see [2, \S 5.2].

For a given $Y \in V(C)$, let $[ i_1,  i_2, \dotsc, i_{2l}]$ denote the pfaffian of the principal submatrix of $Y$ indexed by $i_1,  i_2, \dotsc, i_{2l} \in \mathbf{n}$. Then for any $m \leq r$, we have
\[
 (\sum_{i, j \in \mathbf{n}} Y_{ij} \: e_i \wedge e_j )^m
= 2^m (\sum_{\substack{ i, j \in \mathbf{n} \\  i < j}} Y_{ij} \: e_i \wedge e_j )^m
= 2^m m! \sum_{ i_1 <  i_2 < \cdots < i_{2m}} [ i_1,  i_2, \dotsc, i_{2m}] \: e_{i_1} \wedge e_{i_2} \wedge \cdots \wedge e_{i_{2m}}
\]
where the rightmost sum is among all size $2m$ subsets of $\mathbf{n}$. Recall that $e^{P, \overline{Q}}$ is a wedge product of $e_i$, $i \in P \cup \overline{Q}$, but the indices are not necessarily in increasing order (see Notation 2.3 (B)). It's convenient to define $[P, \overline{Q}]$ likewise so that we have the identity
\[
(\sum_{i, j \in \mathbf{n}} Y_{ij} \: e_i \wedge e_j )^m = 2^m m! \sum_{\substack{P, Q  \subset \{1, \cdots, r\} \\  |P| + |Q| = 2m}} [P, \overline{Q}] \: e^{P, \overline{Q}} \, . \tag{2.3} \label{2.3} 
\]

\noindent
That is, $[P, \overline{Q}]$ denotes the pfaffian of the $2m \times 2m$ principal submatrix of $Y$ indexed by $P \cup \overline{Q}$, but the order of indices coincides with that of $e^{P, \overline{Q}}$.

\begin{ex}  
Let $P = \{ 1, 2, 5 \}$ and $Q = \{2, 3, 4\}$. Then
$$ [P, \overline{Q}] = [ \{ 1, 2, 5 \},  \{ \overline{2}, \overline{3}, \overline{4} \} ] = [2, \overline{2}, 1, 5, \overline{3}, \overline{4}]$$

\noindent
( $= -[1, 2, 5, \overline{2}, \overline{3}, \overline{4}]$ since $( 2, \overline{2}, 1, 5, \overline{3}, \overline{4} )$ is an odd permutation of $( 1, 2, 5, \overline{2}, \overline{3}, \overline{4} )$).
\end{ex}

\begin{lem}  
Assume $k = 2m$ is an even integer, and $ 1 \leq t \leq m \leq r$. Let $P'$ and $Q'$ be two fixed subsets of $\mathbf{r} := \{ 1, 2, \dotsc, r \}$ with $|P'| + |Q'| = k-2t$. Then for any $Y \in V(C)$,
\[
2^m m! \hspace{0.3mm} t! \sum_{\Gamma_t} [P' \cup \Gamma_t, \overline{Q' \cup \Gamma_t}] = 0
\]
where $\Gamma_t$ runs through all size $t$ subsets of $\mathbf{r} \smallsetminus ( {P' \cup Q'} )$. 

In particular, if $F$ is a field with $\mbox{char } F = 0$ or $\mbox{char } F > r$ then
\[
 \sum_{\substack{\Gamma_t  \subset \hspace{1mm} \mathbf{r} \smallsetminus (P' \cup Q')\\  { |\Gamma_t| = t}}} [P' \cup \Gamma_t, \overline{Q' \cup \Gamma_t}] = 0. \tag{2.4} \label{2.4}
\]
\end{lem}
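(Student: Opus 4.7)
The plan is to combine Lemma 2.2 with the explicit expansion (2.3) and the formula from Lemma 2.6(2), then extract the coefficient of a specific basis element $e^{P', \overline{Q'}}$ of $\bigwedge^{k-2t} E$.

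Concretely, I start from (2.3) and apply $\Phi_{k,t}$ term by term. By Lemma 2.6(2) (together with the convention that $e^{P \smallsetminus \Gamma_t, \overline{Q \smallsetminus \Gamma_t}} = 0$ when $\Gamma_t \not\subset P \cap Q$), this yields
\[
\Phi_{k, t}(w^m) \; = \; 2^m m! \, t! \sum_{\substack{P, Q \subset \mathbf{r} \\ |P|+|Q|=2m}} [P, \overline{Q}] \sum_{|\Gamma_t|=t} e^{P \smallsetminus \Gamma_t, \; \overline{Q \smallsetminus \Gamma_t}}.
\]
On the other hand, Lemma 2.2 tells us that the left-hand side equals zero. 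The second (main) step is to reindex the double sum so as to collect the coefficient of each basis vector $e^{P', \overline{Q'}}$ with $|P'|+|Q'|=k-2t$. Given such $(P', Q')$, a triple $(P, Q, \Gamma_t)$ contributes $e^{P', \overline{Q'}}$ precisely when $P = P' \cup \Gamma_t$, $Q = Q' \cup \Gamma_t$, and $\Gamma_t \subset P \cap Q$. The conditions $P \smallsetminus \Gamma_t = P'$ and $Q \smallsetminus \Gamma_t = Q'$ force $\Gamma_t$ to be disjoint from both $P'$ and $Q'$, so $\Gamma_t$ ranges exactly over the size-$t$ subsets of $\mathbf{r} \smallsetminus (P' \cup Q')$. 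Since the $e^{P', \overline{Q'}}$ form a basis of $\bigwedge^{k-2t} E$ (Notation 2.3), the vanishing of the coefficient of each $e^{P', \overline{Q'}}$ gives the first displayed identity of the lemma.

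For the second assertion, under the hypothesis $\mbox{char } F = 0$ or $\mbox{char } F > r$, the scalar $2^m m! \, t!$ is invertible in $F$ because $2, m, t \leq r$, so (2.4) follows by dividing through. The only delicate point in the argument is the combinatorial bookkeeping in the reindexing, but this is a routine disjoint-union decomposition; there is no genuine obstacle beyond carefully tracking which $\Gamma_t$ give rise to a fixed $(P', Q')$.
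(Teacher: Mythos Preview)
Your proof is correct and follows essentially the same approach as the paper: apply $\Phi_{k,t}$ to the expansion (2.3), use the formula for $\Phi_{k,t}(e^{P,\overline{Q}})$, reindex via $P' = P \smallsetminus \Gamma_t$, $Q' = Q \smallsetminus \Gamma_t$, and read off coefficients in the basis $\{e^{P',\overline{Q'}}\}$. One minor correction: the formula you invoke is Lemma~2.5(2) in the paper's numbering, not Lemma~2.6(2) (2.6 is an example).
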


\begin{proof}
This lemma is based on [3, Prop.1.7] and follows a similar proof structure. 
By Lemma 2.2 and equation $(\ref{2.3})$, we have
\[
0 =  \Phi_{k, t} \bigg( (\sum_{i, j \in \mathbf{n}} Y_{ij} e_i \wedge e_j )^m \bigg) 
= \Phi_{k, t} \bigg( 2^m m!  \sum_{\substack{P, Q  \subset  \mathbf{r} \\  |P| + |Q| = k}} [P, \overline{Q}] \: e^{P, \overline{Q}} \bigg).
\]

\noindent Then
\begin{flalign*}
\Phi_{k, t} \bigg( 2^m m!  \sum_{\substack{P, Q  \subset  \mathbf{r} \\  |P| + |Q| = k}} [P, \overline{Q}] \: e^{P, \overline{Q}} \bigg)
&= 2^m m! \sum_{\substack{P, Q  \subset  \mathbf{r} \\  |P| + |Q| = k}} [P, \overline{Q}] \: \Phi_{k, t} (e^{P, \overline{Q}})\\
&= 2^m m! \sum_{\substack{P, Q  \subset   \mathbf{r} \\  |P| + |Q| = k}} [P, \overline{Q}] \bigg( t! \sum_{ |\Gamma_t| = t} e^{P \smallsetminus {\Gamma_t}, \hspace{1mm} \overline{Q \smallsetminus {\Gamma_t}}} \bigg) \\
&=  2^m m! \hspace{0.3mm} t! \sum_{\substack{P', Q'  \subset   \mathbf{r} \\  |P'| + |Q'| = k-2t}} \bigg(  \sum_{\substack{\Gamma_t  \subset \hspace{1mm} \mathbf{r} \smallsetminus (P' \cup Q')\\  { |\Gamma_t| = t}}} [P' \cup \Gamma_t, \overline{Q' \cup \Gamma_t}] \bigg) e^{P', \overline{Q'} }
\end{flalign*}

\noindent
by putting $P' = P \smallsetminus {\Gamma_t}$,  $Q' = Q \smallsetminus {\Gamma_t}$ and changing the order of summation. Since $  e^{P', \overline{Q'} }$ are linearly independant, this proves the lemma. When $\mbox{char } F =0$ or $\mbox{char } F > r$, $2^m m! \hspace{0.3mm} t! \in C^{\times}$ so we get ($\ref{2.4}$).
\end{proof}

For the rest of this section, we assume that $F$ is a field with $\mbox{char } F = 0$ or $\mbox{char } F > r$.

\begin{prop}  
Let $ [P' \cup \Gamma, \overline{Q' \cup \Gamma}] $ be a fixed pfaffian of $Y \in V(C)$ with $\Gamma \subset \{1, 2, \dotsc, r\} \smallsetminus (P' \cup Q')$ and $\Gamma \neq \varnothing$. ($P'$ and $Q'$ are not necessarily disjoint.) Then
\[ 
[P' \cup \Gamma, \overline{Q' \cup \Gamma}] - (-1)^{ |\Gamma| } \sum_{\Gamma'} [P' \cup \Gamma', \overline{Q' \cup \Gamma'}] = 0 \tag{2.5} \label{2.5}
\]

\noindent where $\Gamma'$ runs over the subsets of $\{1, 2, \dotsc, r\} \smallsetminus (P' \cup Q' \cup \Gamma$) with $ |\Gamma'| = |\Gamma|$.
\end{prop}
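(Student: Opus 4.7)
The plan is to derive (2.5) as an inclusion–exclusion combination of several instances of Lemma 2.7, indexed by the subsets of $\Gamma$. For brevity set $U := \mathbf{r} \setminus (P' \cup Q')$, so that $\Gamma \subseteq U$, and for every $T \subseteq U$ of size $|\Gamma|$ write $f(T) := [P' \cup T, \overline{Q' \cup T}]$.

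First, for each proper subset $S \subsetneq \Gamma$ I would invoke Lemma 2.7 with the ``base'' pair $(P', Q')$ replaced by $(P' \cup S,\, Q' \cup S)$ (both still contained in $\mathbf{r}$, since $\Gamma$ and hence $S$ is disjoint from $P' \cup Q'$) and with ``$t$'' replaced by $|\Gamma| - |S| \geq 1$. Reindexing the resulting summation by $T := S \cup \Gamma_{|\Gamma|-|S|}$ turns each such instance into
\[
\sum_{\substack{T \subseteq U,\, |T|=|\Gamma| \\ T \supseteq S}} f(T) \;=\; 0. \tag{$\ast$}
\]
The hypotheses of Lemma 2.7 are met: since $[P' \cup \Gamma, \overline{Q' \cup \Gamma}]$ is a pfaffian of an even-sized matrix, $|P'|+|Q'|$ is even, so the common ``$k = 2m$'' across all these applications is the fixed even integer $|P'|+|Q'|+2|\Gamma|$, and $m \leq r$ since $P' \cup \Gamma,\, Q' \cup \Gamma \subseteq \mathbf{r}$.

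Next, I would take the linear combination of $(\ast)$ over $S \subsetneq \Gamma$ with weight $(-1)^{|S|}$ and interchange the order of summation to obtain
\[
0 \;=\; \sum_{\substack{T \subseteq U \\ |T|=|\Gamma|}} f(T) \Bigl(\, \sum_{\substack{S \subseteq \Gamma \cap T \\ S \neq \Gamma}} (-1)^{|S|} \Bigr),
\]
and evaluate the inner coefficient by cases on $\Gamma \cap T$. When $T = \Gamma$ (the unique $T$ with $\Gamma \subseteq T$, using $|T|=|\Gamma|$) the coefficient equals $\sum_{S \subsetneq \Gamma}(-1)^{|S|} = -(-1)^{|\Gamma|}$; when $\Gamma \cap T = \varnothing$ only $S = \varnothing$ contributes and the coefficient is $1$; in every other case one has $\varnothing \subsetneq \Gamma \cap T \subsetneq \Gamma$, the restriction $S \neq \Gamma$ is vacuous, and the coefficient is $\sum_{S \subseteq \Gamma \cap T}(-1)^{|S|} = 0$ because $\Gamma \cap T$ is nonempty. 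Keeping only the two surviving contributions and rearranging yields exactly (2.5).

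The main burden is purely combinatorial bookkeeping — checking that the various applications of Lemma 2.7 all share the same $k$ (which uses the parity of $|P'|+|Q'|$) and that the reindexing between $\Gamma_{|\Gamma|-|S|}$ and the supersets $T \supseteq S$ is bijective. I do not foresee any deeper difficulty, because (2.5) is essentially a M\"obius-type corollary of Lemma 2.7 and requires no further input from the symplectic structure beyond what is already encoded there.
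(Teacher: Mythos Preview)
Your argument is correct and is precisely the inclusion--exclusion derivation that the paper has in mind: the paper's own proof simply cites De Concini's Proposition~1.8, and what you have written is exactly that argument transported from minors to pfaffians, with Lemma~2.7 (equation~(2.4)) playing the role of De Concini's relation~(1.7). The bookkeeping checks you flag (constancy of $k$ across all instances, the parity of $|P'|+|Q'|$, and the bijection $\Gamma_{|\Gamma|-|S|} \leftrightarrow T\setminus S$) are the right things to verify and all go through as you describe.
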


\begin{proof}
The proof is essentially the same as that of [3, Prop.1.8] if we substitute pfaffians for the minors (with fixed column indices $h_1, \dotsc, h_k$) and apply $( \ref{2.4} )$ as a replacement for [3, (1.7)].
\end{proof}

\begin{cor}  
For $Y \in V(C)$, any pfaffian $[P, \overline{Q}]$ with $|P| + |Q| > r$ vanishes.
\end{cor}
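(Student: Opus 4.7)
The plan is to derive this directly from Proposition 2.8 by an appropriate choice of decomposition $P = P' \sqcup \Gamma$, $Q = Q' \sqcup \Gamma$.

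First I would observe that the hypothesis $|P|+|Q| > r$ together with $P, Q \subset \{1, \dotsc, r\}$ forces $P \cap Q$ to be nonempty. Indeed, $|P \cap Q| = |P|+|Q| - |P \cup Q| \geq |P|+|Q| - r > 0$. Set $\Gamma := P \cap Q$, $P' := P \smallsetminus \Gamma$, $Q' := Q \smallsetminus \Gamma$. Then $P'$ and $Q'$ are disjoint subsets of $\{1,\dotsc,r\} \smallsetminus \Gamma$, and $\Gamma$ is a nonempty subset of $\{1,\dotsc,r\} \smallsetminus (P' \cup Q')$, so the hypotheses of Proposition 2.8 are satisfied.

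Next I would apply the relation $(\ref{2.5})$ to rewrite
\[
[P, \overline{Q}] = [P' \cup \Gamma, \overline{Q' \cup \Gamma}] = (-1)^{|\Gamma|} \sum_{\Gamma'} [P' \cup \Gamma', \overline{Q' \cup \Gamma'}],
\]
where $\Gamma'$ ranges over subsets of $\{1,\dotsc,r\} \smallsetminus (P' \cup Q' \cup \Gamma) = \{1,\dotsc,r\} \smallsetminus (P \cup Q)$ of size $|\Gamma|$. The ambient set $\{1,\dotsc,r\} \smallsetminus (P \cup Q)$ has cardinality $r - |P \cup Q| = r - |P| - |Q| + |\Gamma|$, which is strictly less than $|\Gamma|$ since $|P|+|Q| > r$. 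Hence no such $\Gamma'$ exists and the sum is empty, giving $[P, \overline{Q}] = 0$.

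There is no real obstacle here: the corollary is a clean combinatorial consequence of Proposition 2.8, with the pigeonhole observation $|P \cap Q| \geq |P|+|Q|-r$ providing the nonempty $\Gamma$ needed to invoke $(\ref{2.5})$, and the same inequality simultaneously guaranteeing that the complementary set has too few elements to host a replacement $\Gamma'$.
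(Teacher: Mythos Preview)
Your proof is correct and follows essentially the same approach as the paper: set $\Gamma = P \cap Q$, $P' = P \smallsetminus \Gamma$, $Q' = Q \smallsetminus \Gamma$, apply Proposition~2.8, and observe that the complement $\{1,\dotsc,r\} \smallsetminus (P' \cup Q' \cup \Gamma)$ is too small to contain any $\Gamma'$ of size $|\Gamma|$. Your version is slightly more explicit about the cardinality count, but the argument is the same.
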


\begin{proof}
Let $[P, \overline{Q}]$ be such a pfaffian. Then clearly $P \cap Q \neq \varnothing$. We define
$$ \Gamma = P \cap Q, \quad P' = P \smallsetminus \Gamma, \quad Q' = Q \smallsetminus \Gamma,$$
so that $[P, \overline{Q}] =  [P' \cup \Gamma, \overline{Q' \cup \Gamma}]$. Since $|P'| + |Q'| + 2|\Gamma| > r$, there is no $\Gamma' \subset \{1, 2, \dotsc, r\} \smallsetminus (P' \cup Q' \cup \Gamma)$ such that $|\Gamma'| = |\Gamma|$. By Proposition 2.8, we get 
$$[P' \cup \Gamma, \overline{Q' \cup \Gamma}] = 0.$$
\end{proof}


\section{Symplectic standard even-tableaux}

We define a tableau as in [5, p.117]. Let $N$ be a positive integer and $\lambda = (\lambda_1, \lambda_2, \dotsc, \lambda_k)$ be a partition of $N$, i.e.
\[
\lambda_1 \geq \lambda_2 \geq \cdots \geq \lambda_k > 0 \quad \mbox{ and } \quad \sum_{i=1}^{k} \lambda_i = N.
\]
\noindent
The \emph{diagram} $D(\lambda)$ of $\lambda$ is defined as the set $\{ (s, t) \in \mathbb{Z} \times \mathbb{Z} \: | \: 1 \leq s \leq k, \: 1 \leq t \leq \lambda_s \}$. Recall that $\bf{n}$ denotes the set $\{1, 2, \dotsc, r, \overline{1}, \overline{2}, \dotsc, \overline{r}\}$. A \emph{tableau} of shape $\lambda$ with entries in $\bf{n}$ is a map $T: D(\lambda) \rightarrow \bf{n}$, depicted by its array of values

\[
\begin{array}{ccccc}
T(1, 1) & T(1, 2) &  \cdots &  \cdots & T(1, \lambda_1) \\
T(2, 1) & T(2, 2) &  \cdots &  T(2, \lambda_2) & \\
T(3, 1) & T(3, 2) &  \cdots & & \\
\vdots & \vdots & & & 
\end{array}
\]

\noindent
In this document, tableaux have entries in $\bf{n}$ unless specified otherwise. We order the indices of $\bf{n}$ by
\[ \overline{1} \prec 1 \prec \overline{2} \prec 2 \prec \cdots \prec \overline{r} \prec r.  \tag{3.1}  \label{3.1}\]
(This is different from the natural numerical order.) A tableau $T$ is called \emph{standard} if its entries strictly increase along each row and weakly increase down each column with respect to the order $( \ref{3.1} )$. Now we give the definition of \emph{symplectic standard} tableaux following King [7]; note however that the role of rows and columns are exchanged in definitions of standard tableaux and symplectic standard tableaux in [5] and [7]. (De Concini also defines symplectic standard tableaux in [3], but that definition is different from the one given here.)

\begin{defn}  
A tableau $T$ is called \emph{symplectic standard} if
\begin{itemize}
\item it is standard, and
\item the indices $p$ and $\overline{p}$ appear only in the first $p$ columns for $1 \leq p \leq r$.
\end{itemize}
\end{defn}

\noindent
Each row of a symplectic standard tableau must have length $\leq r$ by definition.

\begin{defn}  
If a tableau has no odd-sized rows, we call it an \emph{even-tableau} (and \emph{even-tableaux} for the plural form). In other words, if $T$ is an even-tableau of shape $\lambda = (\lambda_1, \lambda_2, \dotsc, \lambda_k)$, then all $\lambda_i$ are even.
\end{defn}

 Let $R$ be a commutative ring with unity and let $Y = (Y_{ij})$ be an $n \times n$ skew-symmetric matrix of indeterminates, i.e.
\begin{itemize}
\item $Y_{ij} = -Y_{ji} \: \mbox{ if } i < j$, and
\item $Y_{ii} = 0 \mbox{ for all } i$.
\end{itemize}
Let $R[Y_{ij}]_{i<j}$ denote the polynomial ring $R[Y_{ij}: 1 \leq i < j \leq n]$. When $\lambda = (\lambda_1, \lambda_2, \dotsc, \lambda_k)$ is a partition of an even integer $2m$ such that all $\lambda_i$ are even, we associate a tableau $T$ of shape $\lambda$ with the product of pfaffians evaluated at the generic skew-symmetric matrix $Y$, denoted by $[T]$,
$$ [T] := \prod_{i=1}^k [T(i, 1), T(i, 2), \dotsc, T(i, \lambda_i) ]. $$
Clearly $[T]$ can be considered as a degree $m$ homogeneous polynomial in $R[Y_{ij}]_{i<j}$. The combinatorial structure of $R[Y_{ij}]_{i<j}$ is observed by the following facts; see [4, Ch.6] for the proof.

\begin{thm}  
(De Concini and Procesi, 1976) 
\begin{enumerate}[(1)]
\item The products of pfaffians indexed by the standard even-tableaux form an $R$-basis of $R[Y_{ij}]_{i<j}$.
\item For any given even-tableau $T$, the standard representation of $[T]$ can be achieved by iterated use of the following relation
\end{enumerate}
\begin{flalign*}
[a_1, \dotsc, a_{2t}] [b_1, \dotsc, b_{2s}] - \sum_{i=1}^{2t} [a_1, \dotsc, a_{i-1}, b_1, a_{i+1}, \dotsc, a_{2t}] [a_i, b_2, \dotsc, b_{2s}] \\
= \sum_{j=2}^{2s} (-1)^{j-1} [b_2, \dotsc, b_{j-1}, b_{j+1}, \dotsc, b_{2s}] [b_j, b_1, a_1, \dotsc, a_{2t}].
\end{flalign*}
\end{thm}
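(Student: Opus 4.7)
The plan is to establish the straightening relation (2) first, since it is the algorithmic engine for the basis statement in (1), and then deduce (1) in two stages: spanning by iterated reduction via (2), and linear independence by a leading-monomial argument.

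For (2), I would derive the identity inside the exterior algebra. Encode the two factors $[a_1,\dotsc,a_{2t}]$ and $[b_1,\dotsc,b_{2s}]$ as coefficients in the expansions of $\omega^{t}$ and $\omega^{s}$, where $\omega = \sum_{i<j} Y_{ij}\, e_i \wedge e_j$, using formula (2.3) from Section 2. The identity (2) then arises from a Leibniz-type exchange that moves one copy of $e_{b_1}$ across the wedge product between the two pfaffian blocks: the left-hand side collects the ``direct'' contribution, the first sum on the right collects the $2t$ ways of inserting $b_1$ into the first block while extracting some $a_i$, and the second sum collects the residual boundary terms from the second block. An elementary alternative is to verify (2) monomial-by-monomial in the $Y_{ij}$ via a sign-reversing involution on perfect matchings; this is notationally heavy but purely algorithmic.

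For the spanning half of (1), I endow the even-tableaux of shape $\lambda$ with a reverse-lexicographic order (comparing rows top-to-bottom with respect to the order (3.1) on entries). Given a non-standard even-tableau $T$, I locate the first violation of standardness, which occurs in two adjacent rows $i, i+1$. Applying (2) with the $i$-th row as $(a_1,\dotsc,a_{2t})$ and the $(i{+}1)$-st row as $(b_1,\dotsc,b_{2s})$, every product of pfaffians appearing on the right-hand side corresponds to an even-tableau strictly smaller than $T$ in the chosen order. Noetherian descent then terminates in a linear combination of standard $[T']$'s. One must separately confirm that evenness of row lengths is preserved by each application of (2), which is immediate from the parity counts in the statement.

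For linear independence, I use the lexicographic monomial order on $\{Y_{ij}\}_{i<j}$. Under this order, the leading term of the pfaffian $[i_1,\dotsc,i_{2l}]$ (with $i_1<\cdots<i_{2l}$) is the diagonal matching $Y_{i_1 i_2} Y_{i_3 i_4}\cdots Y_{i_{2l-1}i_{2l}}$; hence $\mathrm{lead}([T])$ for a standard even-tableau $T$ of shape $\lambda$ is the product of these diagonal matchings read off row by row. The main obstacle I anticipate is showing that $T \mapsto \mathrm{lead}([T])$ is injective on standard even-tableaux of a fixed shape $\lambda$. My strategy here is an inductive unpeeling: from the leading monomial, the smallest variable $Y_{ab}$ appearing forces its cell-placement in $T$ (the standardness and evenness conditions pin down the row and the column-pair in which $(a,b)$ must sit), after which one removes $(a,b)$ and recurses on a shape obtained by trimming the tableau. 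Once this injectivity is in hand, linear independence of the $[T]$'s follows immediately from the distinctness and nonvanishing of their leading monomials.
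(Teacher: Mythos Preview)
The paper does not prove this theorem; it is quoted from De Concini--Procesi with only the citation ``see [4, Ch.~6] for the proof.'' So there is no in-paper argument to compare against, and I assess your outline on its own merits.

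Your spanning argument has a repairable gap: you fix a shape $\lambda$ and order only the even-tableaux of that shape, but the second sum on the right of (2) \emph{changes} the shape---rows of lengths $2t,2s$ become rows of lengths $2s-2,\,2t+2$. You need an order on all even-tableaux (for instance, shape first by a dominance-type comparison, then lexicographically on entries) and must verify that the shape-changing terms also decrease. This is routine once noticed, but as stated your descent is not well-founded.

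The linear-independence argument, however, fails outright. Distinct standard even-tableaux of different shapes can share a leading monomial. Already for $n\ge 8$ (so that $1,2,3,4$ are all unbarred indices with $1\prec 2\prec 3\prec 4$), the one-row tableau $(1,2,3,4)$ and the two-row tableau with rows $(1,2)$ and $(3,4)$ are both standard even-tableaux, yet
\[
[1,2,3,4]=Y_{12}Y_{34}-Y_{13}Y_{24}+Y_{14}Y_{23}\qquad\text{and}\qquad [1,2]\,[3,4]=Y_{12}Y_{34}
\]
have the same lex-leading monomial $Y_{12}Y_{34}$. More generally, whichever perfect matching of $\{i_1,\dots,i_{2l}\}$ your monomial order selects as the leading term of $[i_1,\dots,i_{2l}]$, that same monomial is the entire product $[T']$ for the two-column standard even-tableau $T'$ whose rows are the pairs of that matching. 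So $T\mapsto\mathrm{lead}([T])$ is not injective across shapes, and your ``inductive unpeeling'' cannot recover the shape from the monomial. A correct argument needs something that separates shapes---e.g., a filtration by shape and a dimension count on the associated graded, or the bijective/specialization methods in De Concini--Procesi---rather than a bare leading-term comparison.
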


\begin{rmk}  
Consider the natural numerical order given on the indices of $\bf{n}$:
\[
1< 2 < \cdots < r < \overline{1} < \overline{2} < \cdots < \overline{r}.   \tag{3.2}  \label{3.2}
\]
Recall that the definition of the standard tableaux depends on the order $(\ref{3.1})$ given on $\bf{n}$. There is another $R$-basis of $R[Y_{ij}]_{i<j}$ indexed by '$<$-standard' even-tableaux. More precisely, the map $T \mapsto [T]$ from 
\[
\left\lbrace T \; \middle| \;
\begin{tabular}{@{}l@{}}
$T$ is an even-tableau such that the entries of $T$ strictly increase along each row \\
and weakly increase down each column with respect to the order $(\ref{3.2})$
\end{tabular}
\right\rbrace
\]
into $R[Y_{ij}]_{i<j}$ is injective and its image forms an $R$-basis of $R[Y_{ij}]_{i<j}$.
\end{rmk}

The \emph{type} of a tableau $T$ is defined as the $2r$-tuple of integers $(a_1, b_1, \dotsc, a_r, b_r)$ where
 
\begin{center}
$a_p := |T^{-1}(\overline{p})|$ = the number of occurences of $\overline{p}$ in $T$\\
$b_p := |T^{-1}(p)|$ = the number of occurences of $p$ in $T$
\end{center}

\noindent
for $1 \leq p \leq r$; see [6, p.118]. We define a total order on the set of $2r$-tuples of integers by setting
\[
(a_1, b_1, \dotsc, a_r, b_r)  \trianglelefteq  (a_1', b_1', \dotsc, a_r', b_r')
\]

\noindent 
if $(b_r, a_r,  \dotsc, b_1, a_1)$ is less than or equal to $(b_r', a_r',  \dotsc, b_1', a_1')$ in the lexicographic order.

\begin{rmk}  
Note that the straightening relation in Theorem 3.3 (2) does not change the type of a given tableau. Hence for every even-tableau $T$, $[T]$ can be written as a linear sum $[T] = c_1 [T_1] + \cdots + c_k [T_k]$,  $c_i \in R$, where all $T_i$ are standard even-tableaux whose types are same as the type of $T$.
\end{rmk}

Since the coordinate ring $R[V]$ is a quotient ring of $R[Y_{ij}]_{i<j}$, we can naturally associate an even-tableau $T$ with an element of $R[V]$, which we also denote by $[T]$ by an abuse of notation. Note that $R[V]$ is a graded algebra over $R$. (The polynomials in $Y_{ij}$, $1 \leq i, j \leq n$, obtained from $(\ref{1.1})$ are all homogeneous.) We denote the degree $m$ homogeneous part of $R[V]$ by $R[V]_m$. Let $\mathcal{T}(m)$ be the set of all even-tableaux whose shapes are partitions of $2m$. Theorem 3.3 (1) implies the following:

\begin{cor}  
The set $\{[T] \in R[V] \; | \; T $ is a standard even-tableau in $\mathcal{T}(m) \}$ spans $R[V]_m$ over $R$.
\end{cor}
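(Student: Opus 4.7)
The plan is to deduce Corollary 3.6 as an essentially immediate consequence of Theorem 3.3 (1), once we have noted that $R[V]$ inherits a grading from $R[Y_{ij}]_{i<j}$. The conditions defining $V$ in $(\ref{1.1})$ — namely $Y+Y^T=0$, the vanishing of the diagonal entries, the matrix identity $Y^T J Y=0$, and the vanishing of each non-leading coefficient of $\mbox{char}_{(-JY)}(T)$ — are all homogeneous in the entries $Y_{ij}$. Hence the defining ideal is graded, and the quotient $R[Y_{ij}]_{i<j}\twoheadrightarrow R[V]$ is a surjection of graded $R$-algebras.

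Given this, I would take any $f\in R[V]_m$ and lift it to a homogeneous polynomial $\tilde{f}\in R[Y_{ij}]_{i<j}$ of degree $m$; such a lift exists because the quotient map is graded. By Theorem 3.3 (1), I can write $\tilde{f}=\sum_T c_T\,[T]$ as a finite $R$-linear combination indexed by standard even-tableaux $T$. The key observation is that the pfaffian of a $2\ell\times 2\ell$ principal submatrix of $Y$ is a homogeneous polynomial of degree $\ell$ in the entries $Y_{ij}$, so that if $T$ has shape $\lambda=(\lambda_1,\dotsc,\lambda_k)$ with every $\lambda_i$ even, then $[T]$ is homogeneous of degree $\tfrac{1}{2}\sum_i\lambda_i$. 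Since the standard even-tableaux form a basis of $R[Y_{ij}]_{i<j}$, the decomposition of $\tilde{f}$ into its graded pieces respects this basis, so I may assume $c_T=0$ unless $\sum_i\lambda_i=2m$, i.e. unless $T\in\mathcal{T}(m)$. Reducing modulo the defining ideal of $V$ gives $f=\sum_{T}c_T\,[T]$ in $R[V]_m$, with the sum over standard even-tableaux in $\mathcal{T}(m)$.

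There is no genuine obstacle in this argument; the only point requiring care is the verification that each defining relation of $V$ is homogeneous in the $Y_{ij}$, which is what allows the grading to descend to $R[V]$ and lets me isolate precisely the tableaux of total size $2m$.
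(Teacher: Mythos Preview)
Your argument is correct and follows exactly the route the paper takes: the paper states the corollary as an immediate consequence of Theorem~3.3\,(1) together with the observation (made just before the corollary) that the defining relations of $V$ are homogeneous, so $R[V]$ inherits a grading. You have simply spelled out in detail what the paper leaves implicit.
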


The proof of next propostion closely follows that of the Symplectic Carter-Lusztig lemma from [5, p.119].

\begin{prop}  
Let $F$ be a field with $\mbox{char } F = 0$ or $\mbox{char } F > r$. The set $\{[T] \; | \; T $ is a symplectic standard even-tableau$\}$ spans $F[V]$ over $F$.
\end{prop}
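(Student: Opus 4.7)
The plan is to argue by induction on the type of $T$ in the $\trianglelefteq$ ordering, which is well-founded on the (finite) set of types among shapes of size $2m$. By Corollary 3.6, $F[V]_m$ is spanned by $\{[T] : T$ is a standard even-tableau in $\mathcal{T}(m)\}$, so it suffices to show that each standard $T$ which is not symplectic standard has $[T]$ in the span of $[T']$'s with $T'$ either symplectic standard or of strictly smaller type than $T$.

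Suppose $T$ is standard but not symplectic standard. Then some row $i$ of $T$ and some column $j$ satisfy that the absolute value of the index $T(i, j)$ is less than $j$. Because row $i$ is strictly $\prec$-increasing, its first $j$ entries are $j$ distinct elements of the $\prec$-segment $\{\overline{1}, 1, \ldots, \overline{j-1}, j-1\}$, which has $2(j-1)$ members organized as the $j-1$ pairs $\{p, \overline{p}\}$ for $p \leq j-1$. Since $j > j-1$, pigeonhole forces a complete pair $\{k, \overline{k}\}$ to lie in row $i$, so writing the row's pfaffian as $[P, \overline{Q}]$ we obtain $P \cap Q \neq \varnothing$. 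Apply Proposition 2.8 with a suitable $\Gamma \subseteq P \cap Q$:
\[
[P, \overline{Q}] = (-1)^{|\Gamma|} \sum_{\substack{\Gamma' \subseteq \mathbf{r} \smallsetminus (P \cup Q) \\ |\Gamma'| = |\Gamma|}} [(P \setminus \Gamma) \cup \Gamma', \overline{(Q \setminus \Gamma) \cup \Gamma'}],
\]
substitute into the product-of-pfaffians expression for $[T]$, and straighten each resulting product via iterated use of Theorem 3.3(2). By Remark 3.5 the straightening preserves type, so $[T]$ is expressed as a linear combination of $[T_j]$'s for standard even-tableaux $T_j$ whose types agree with those of the substituted tableaux.

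The key combinatorial step---mirroring the analysis in Donkin's proof of the symplectic Carter-Lusztig lemma in [5, p.119]---is to choose $\Gamma$ so that every resulting $T_j$ is either symplectic standard or has type strictly smaller than $T$ in $\trianglelefteq$. Because $\Gamma \subseteq P \cup Q$ while $\Gamma' \subseteq \mathbf{r} \smallsetminus (P \cup Q)$, the sets $\Gamma$ and $\Gamma'$ are disjoint; hence the symmetric difference $\Gamma \triangle \Gamma' = \Gamma \cup \Gamma'$, and the sign of the change from type$(T)$ to type$(T_j)$ is governed by whether the maximum element of $\Gamma \cup \Gamma'$ lies in $\Gamma$ (in which case the leading coordinate of the type vector that changes is decremented, so type$(T_j) <$ type$(T)$) or in $\Gamma'$ (in which case type$(T_j) >$ type$(T)$). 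The main obstacle is the latter case: one must verify that the modified row is then forced to be symplectic standard, by a column-by-column check that for every threshold $j$, the count of entries in the new row with absolute value at most $j$ does not exceed $j$. Once this dichotomy is in place, the inductive hypothesis handles the smaller-type $T_j$'s and concludes that $[T]$ lies in the desired span.
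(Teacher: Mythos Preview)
Your induction runs in the wrong direction. With the choice of $\Gamma$ that actually engages the symplectic violation, Proposition~2.8 produces tableaux of strictly \emph{larger} type in $\trianglelefteq$, not smaller. The paper takes $h$ minimal with $T(l,h)\in\{u,\overline{u}\}$, $u<h$, and sets $\Gamma=\{\,p:\text{both }p\text{ and }\overline{p}\text{ occur among }T(l,1),\dots,T(l,h)\,\}$; since the first $h$ entries all have index $\le h-1$, one has $A\cup\Gamma\subseteq\{1,\dots,h-1\}$ and $|A|+2|\Gamma|=h$, so any $\Gamma'$ (disjoint from $A\cup\Gamma$, with $|\Gamma'|=|\Gamma|$) is forced to contain an element $>h-1\ge\max\Gamma$. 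Hence \emph{every} replacement $T_{\Gamma'}$ has strictly larger type than $T$, and the argument closes by choosing a bad $T$ of \emph{maximal} type (equivalently, by downward induction on type), not minimal.

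Your proposed escape route for the ``type increases'' case --- that the modified row must then satisfy the symplectic column condition --- is false. For $r=4$, row $(\overline{1},1,\overline{2},2)$, $\Gamma=\{1\}$, $\Gamma'=\{3\}$: the new row is $(\overline{2},2,\overline{3},3)$, whose fourth entry $3$ has index $3<4$, so the row is still non-admissible even though $\max(\Gamma\cup\Gamma')=3\in\Gamma'$. And even were the modified row admissible, that would not make $T_{\Gamma'}$ symplectic standard: the other rows of $T$ are untouched, and after the straightening of Theorem~3.3(2) (which preserves type but nothing else relevant here) you have no control over symplectic standardness of the resulting standard tableaux. So the dichotomy ``symplectic standard or strictly smaller type'' cannot be established, and the inductive step does not close as written; reversing the order of the induction fixes everything.
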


\begin{proof}
Let $m \geq 1$ be chosen aribitrarily. It suffices to prove that $F[V]_{m}$ is spanned by 
\[
\{[T] \; | \;  T \mbox{ is a symplectic standard even-tableau in } \mathcal{T}(m) \}.  \tag{3.3}  \label{3.3}
\] 
Let $S$ be the $F$-span of $(\ref{3.3})$, and suppose that $S$ is a proper subset of  $F[V]_{m}$ for a contradiction. Since the set of all $[T]$, $T \in \mathcal{T}(m)$, spans $F[V]_{m}$, the set $\{ T \in \mathcal{T}(m) \: | \: [T] \notin S \}$ is nonempty. We choose an even-tableau $T$ in this set such that the type of $T$ is as large as possible in the $\trianglelefteq$ ordering. By Remark 3.5, there must be a standard even-tableau $T'$ such that $[T'] \notin S$ and the type of $T'$ is same as the type of $T$. Replacing $T$ by $T'$, we may assume that $T$ is standard.

Since $[T] \notin S$, $T$ cannot be symplectic standard. Hence there is a position $(l, h)$ such that $T(l, h) = u$ or $\overline{u}$ with $u < h$. Assume that $h$ is minimal with this property. Say $T(l, h-1) = v$ or $\overline{v}$. Then 
$$h-1 \leq v \leq u < h.$$
Here $h-1 \leq v$ is by the minimality of $h$, and $v \leq u$ is by the standardness of $T$. Hence we are forced to have $h-1 = v = u$ and more precisely,
$$ T(l, h-1) = \overline{h-1} \quad \text{and} \quad T(l, h) = h-1. $$
Let $\bf{r}$ denote the set $\{ 1, 2, \dotsc, r \}$ and define
 \begin{flalign*}
&\Gamma = \{p \in \mathbf{r} \: | \text{  both } p \text{ and } \overline{p} \text{ occur among } T(l, 1), T(l, 2), \dotsc , T(l, h) \},\\
&A = \{ p \in \mathbf{r} \: | \text{ exactly one of } p \text{ or } \overline{p} \text{ occurs among } T(l, 1), T(l, 2), \dotsc , T(l, h) \},\\
&P' = \{ p \in \mathbf{r} \: | \: p  \text{ occurs in row } l \text{ of } T \} \smallsetminus \Gamma, \\
&Q' = \{ p \in \mathbf{r}  \: | \: \overline{p}  \text{ occurs in row } l \text{ of } T \} \smallsetminus \Gamma.
\end{flalign*}
Then row $l$ of $T$ corresponds to $ [P' \cup \Gamma, \overline{Q' \cup \Gamma}]$ up to sign, and $2 |\Gamma| + |A| = h$. From Proposition 2.8, we have
\[
[P' \cup \Gamma, \overline{Q' \cup \Gamma}] = (-1)^{ |\Gamma| } \sum_{\Gamma'} [P' \cup \Gamma', \overline{Q' \cup \Gamma'}] \tag{3.4} \label{3.4}
\]
\noindent where $\Gamma'$ runs over the subsets of $\mathbf{r} \smallsetminus (P' \cup Q' \cup \Gamma)$ such that $ |\Gamma'| = |\Gamma|$.
Any such $\Gamma'$ doesn't have intersection with $A \cup \Gamma$ (a disjoint union) and  $h = 2 |\Gamma| + |A| = |\Gamma'| + |\Gamma| + |A|$. Note that both $A$ and $\Gamma$ are subsets of $\{1, 2, \dotsc, h-1\}$. As a result, $\Gamma'$ must contain an element greater than $h-1$. Let $T_{\Gamma'}$ denote the tableau obtained from $T$ by replacing its row $l$ with the one row tableau corresponding to $[P' \cup \Gamma', \overline{Q' \cup \Gamma'}]$. Then $[T]$ is equal to 
\[
(-1)^{ |\Gamma| } \sum_{\substack {\Gamma' \subset r - P' \cup Q' \cup \Gamma \\ |\Gamma'| = |\Gamma|}} [T_{\Gamma'}]
\]
up to sign by $( \ref{3.4} )$. Since the type of each $T_{\Gamma'}$ is strictly bigger than the type of $T$ with respect to $\trianglelefteq$, all $[T_{\Gamma'}]$ are in $S$ by the maximality of the type of $T$. Now $[T]$ is also in $S$, a contradiction.
\end{proof}

\par
Now we prove the linear independency of the set $\{[T] : T \mbox{ is a symplectic standard even-tableau}\}$ over a field $F$ of arbitrary characteristic. First, let us specify some matrices in $Sp(n, F)$. Let $E_{i, j}$ denote the $n \times n$ matrix which has 1 at entry $(i, j)$ and 0 at all the other entries. It's easy to check that the matrices
\[
I_n -\mu \: E_{i, j}+ \mu \: E_{\overline{j}, \overline{i}}, \quad
I_n + \mu \: E_{\overline{i}, i}, \quad
I_n +\mu \: E_{\overline{j}, i}+ \mu \: E_{\overline{i}, j}
\]

\noindent
are in $Sp(n, F)$ if $\mu \in F$ and $i, j \in \{1, 2, \dotsc, r \}$ with $ i \neq j $.

\begin{defn}  
We say that a given tableau is \emph{canonical} if in each column $j$ of the tableau, only $\overline{j}$ appears. 
\end{defn}

\par
Here's an example of a canonical tableau:
\[
\begin{array}{cccc}
\overline{1} & \overline{2} &  \overline{3} & \overline{4}\\
\overline{1} & \overline{2} &   & \\
\overline{1} & \overline{2} &  & 
\end{array}
\]
Clearly every canonical tableau is symplectic standard.

\begin{prop}  
Let $F$ be a field of arbitrary characteristic.
\begin{enumerate}[(1)]
\item The map from the set of all canonical even-tableaux into $F[V]$, given by $T \mapsto [T]$, is injective and its image in $F[V]$ is linearly independent over $F$. 
\item The map from the set of all symplectic standard even-tableaux into $F[V]$, given by $T \mapsto [T]$, is injective and its image in $F[V]$ is linearly independent over $F$. 
\end{enumerate}
\end{prop}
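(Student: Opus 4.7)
The plan is to prove (1) by an explicit evaluation argument and then bootstrap to (2) using the $Sp(n,F)$-action established at the start of the section.

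For (1), I will specialize $Y$ to a block form over $C := F[D_{ij}]_{1 \leq i < j \leq r}$: namely, set the $\{1,\ldots,r\}\times\{1,\ldots,r\}$ block and the two mixed blocks to zero, and let the $\{\overline{1},\ldots,\overline{r}\}\times\{\overline{1},\ldots,\overline{r}\}$ block be the generic skew-symmetric matrix $D$. Skew-symmetry is automatic, $Y^{T}JY=0$ because both block products vanish, and $-JY$ is strictly block upper triangular with nilpotent diagonal blocks, so $\mbox{char}_{-JY}(T)=T^{n}$. This produces an $F$-algebra morphism $\phi:F[V]\to C$. Under $\phi$, any pfaffian $[i_{1},\ldots,i_{2s}]$ with some $i_{k}\in\{1,\ldots,r\}$ vanishes (the corresponding row is zero), while a pfaffian with all indices barred evaluates to the corresponding principal pfaffian of $D$ (up to sign). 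Thus a canonical even-tableau of shape $\lambda$ maps to $\pm\prod_{i}Pf(D[1,\ldots,\lambda_{i}])$, which is the image in $C$ of a standard (in the natural order) even-tableau of the $r\times r$ generic skew-symmetric matrix. By Theorem 3.3(1) together with Remark 3.4, these images are linearly independent in $C$, which yields (1).

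For (2), suppose $\sum_{T}c_{T}[T]=0$ in $F[V]$ over symplectic standard even-tableaux $T$. Consider the commuting unipotent elements $g(\vec{\mu})=\prod_{i=1}^{r}(I_{n}+\mu_{i}E_{\overline{i},i})\in Sp(n,F)$. Their action substitutes each positive index $i$ in a pfaffian by $i+\mu_{i}\overline{i}$, expanded multilinearly. After composing with $\phi$, only the term in which every positive index has been substituted by its barred version survives, giving
\[
\phi\bigl(g(\vec\mu)\cdot[T]\bigr) = \Bigl(\prod_{i=1}^{r}\mu_{i}^{b_{i}^{T}}\Bigr)\phi([\tilde{T}])
\]
when every row of $T$ has pairwise distinct absolute values (call $T$ \emph{simple}), and $0$ otherwise (a row containing both $k$ and $\overline{k}$ produces a pfaffian with repeated indices after the substitution). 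Here $\tilde{T}$ is the tableau obtained by barring every positive entry of $T$; it is symplectic standard with only barred entries, so the column constraint $k\leq j$ on $\overline{k}$ in column $j$ combined with row strictness forces $\tilde{T}$ to be the canonical tableau of the same shape $\lambda_{T}$ as $T$.

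Applying $\phi\circ g(\vec\mu)$ to the relation and using (1) to separate by shape $\lambda$, one obtains
\[
\sum_{T\text{ simple of shape }\lambda} c_{T}\prod_{i=1}^{r}\mu_{i}^{b_{i}^{T}} = 0
\]
as a polynomial identity in $\vec\mu$ for every $\lambda$. A direct combinatorial check, using the column constraint, shows that inside a fixed shape $\lambda$ the multi-index $(b_{1}^{T},\ldots,b_{r}^{T})$ determines a simple symplectic standard even-tableau uniquely: the absolute values in each row are forced to be $(1,2,\ldots,\lambda_{i})$, and each $\prec$-weakly-increasing column consists of an initial block of $\overline{j}$'s followed by $j$'s whose lengths are pinned down by $b^{T}$. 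Hence $c_{T}=0$ for every simple $T$. Non-simple tableaux are then handled by induction on the number of same-row $(k,\overline{k})$ pairs, using the remaining unipotent root-group elements $I_{n}+\mu(E_{\overline{i},j}+E_{\overline{j},i})$ and $I_{n}-\mu E_{i,j}+\mu E_{\overline{j},\overline{i}}$ for $i\neq j$ listed earlier; these substitute a positive index by a barred index with a shifted subscript, which lets one untangle one $(k,\overline{k})$ pair at a time and reduce to a sub-relation with fewer conflicts.

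The main obstacle is precisely this non-simple case: one must verify that the iterated unipotent substitutions suffice to isolate every remaining $c_{T}$, and that the straightening needed after each substitution (converting the output into a symplectic standard combination via Theorem 3.3(2) and Proposition 2.8) does not recreate conflicts without progress. By contrast, the simple-case argument is essentially formal once part (1) and the combinatorial uniqueness above are in place.
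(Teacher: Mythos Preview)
Your argument for part (1) is essentially the paper's: specialize $Y$ so that only the $\{\overline 1,\dots,\overline r\}\times\{\overline 1,\dots,\overline r\}$ block is a generic skew-symmetric matrix, and use the standard basis of the pfaffian ring (Theorem~3.3/Remark~3.4). That is fine.

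Part (2), however, has a genuine gap already in the ``simple'' case. You invoke the column constraint as ``$k\le j$ for $\overline k$ in column $j$'', but Definition~3.1 says the opposite: $p$ and $\overline p$ appear only in the \emph{first} $p$ columns, so an entry in column $j$ has absolute value $\ge j$, not $\le j$. Consequently $\tilde T$ need not be canonical (e.g.\ the one-row tableau $(\overline 1,\overline 3)$ with $r\ge 3$ is simple symplectic standard and equals its own $\tilde T$, which is not canonical), and you cannot invoke (1) to separate by shape. Even if you repair this step by separating instead by the $<$-standard tableau $\eta(\tilde T)$ via Remark~3.4, the uniqueness claim ``$(b_1^T,\dots,b_r^T)$ determines a simple symplectic standard even-tableau of a given shape'' is false. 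For $r=3$ and shape $(2,2)$ the two tableaux
\[
T_A=\begin{array}{cc}\overline 1&2\\ \overline 2&\overline 3\end{array}
\qquad\text{and}\qquad
T_B=\begin{array}{cc}\overline 1&\overline 2\\ 2&\overline 3\end{array}
\]
are both simple and symplectic standard, share the same $\tilde T$, the same $b^T=(0,1,0)$, and indeed satisfy $\phi\bigl(g(\vec\mu)\cdot[T_A]\bigr)=\phi\bigl(g(\vec\mu)\cdot[T_B]\bigr)=\mu_2\,D_{12}D_{23}$. Your simultaneous action by all $I_n+\mu_iE_{\overline i,i}$ followed by $\phi$ therefore cannot distinguish $[T_A]$ from $[T_B]$, so the method does not close even before one reaches the non-simple case.

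The paper's proof of (2) proceeds differently: it acts by \emph{one} carefully chosen unipotent element at a time, determined by the minimal offending entry $p$ and the minimal column $j$ in which it occurs, and reads off the leading $\mu$-coefficient of the resulting relation. This yields a shorter dependence relation among symplectic standard tableaux with strictly larger parameter $(p,j)$; iterating, one eventually reaches a relation among canonical tableaux, contradicting (1). The point is that the induction is on the location of the first non-canonical entry rather than on a global ``number of conflicts'', and the leading-term extraction avoids any need for the kind of global uniqueness you attempted.
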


\begin{proof}
Without loss of generality, we may assume that $F$ is infinite. First, we consider the case $r$ is even. For any $F$-algebra $C$, $V(C)$ contains all $n \times n$ matrices of the form
\[
\begin{tikzpicture}[
style1/.style={
  matrix of math nodes,
  every node/.append style={text width=#1,align=center,minimum height=5ex},
  nodes in empty cells,
  left delimiter=[,
  right delimiter=],
  }
]
\matrix[style1=0.5cm] (1mat)
{
  & & &\\
  & & &\\
  & & &\\
  & & &\\
};
\draw[dashed]
  (1mat-2-1.south west) -- (1mat-2-4.south east);
\draw[dashed]
  (1mat-1-2.north east) -- (1mat-4-2.south east);
\node[font=\large] 
  at (1mat-1-1.south east) {$0$};
\node[font=\large] 
  at (1mat-1-3.south east) {$0$};
\node[font=\large] 
  at (1mat-3-1.south east) {$0$};
\node[font=\large] 
  at (1mat-3-3.south east) {$A$};
\end{tikzpicture}
\]
\noindent
where $A$ is a $r \times r$ skew-symmetric matrix with entries in $C$. In other words, there is a closed embedding from the scheme of $r \times r$ skew-symmetric matrices into $V$. Let 
$$\psi: F[V] \rightarrow F[X_{ij}]_{i<j} := F[X_{i j}: 1 \leq i < j \leq r]$$
be the corresponding ring homomorphism. Note that every pfaffian in $F[V]$ of the form $[\bar{1}, \bar{2}, \dotsc, \bar{i}]$, $1 \leq i \leq r$, maps to the pfaffian $[1, 2, \dotsc, i]$ (evaluated at the generic $r \times r$ skew-symmetric matrix $X = (X_{ij})$) in $F[X_{ij}]_{i<j}$. Let $\mathbf{r} := \{ 1, 2, \dotsc, r \}$ and let $\eta: \mathbf{n} \rightarrow \mathbf{r}$ be a map sending $\overline{i}$ to $i$ for $1 \leq i \leq r$. For any canonical Tableau $T$ with entries in $\bf{n}$, $\eta \circ T$ is a tableau with entries in $\bf{r}$, and $[T] \in F[V]$ clearly maps to $[\eta \circ T] \in F[X_{ij}]_{i<j}$ under $\psi$. Moreover, if $T_1$ and $T_2$ are distinct canonical tableaux with entries in $\bf{n}$, then $\eta \circ T_1$ and $\eta \circ T_2$ are also distinct tableaux in the following set: 
\[
\left\lbrace T \; \middle| \;
\begin{tabular}{@{}l@{}}
$T$ is an even-tableau with entries in $\bf{r}$ such that the entries of $T$ strictly increase along each row\\
and weakly increase down each column with respect to the order $1 \leq 2 \leq \cdots \leq r$ 
\end{tabular}
\right\rbrace
\]
Then by Remark 3.4, the (composition) map from the set of all canonical even-tableaux into $F[X_{ij}]_{i<j}$, given by $T \mapsto \psi([T])$, is injective and its image in $F[X_{ij}]_{i<j}$ is linearly independent over $F$. This proves (1) when $r$ is even.

Now suppose $r$ is an odd integer. We consider the $n \times n$ matrices of the form
\[
\begin{tikzpicture}[
style1/.style={
  matrix of math nodes,
  every node/.append style={text width=#1,align=center,minimum height=3ex},
  nodes in empty cells,
  left delimiter=[,
  right delimiter=],
  }
]
\matrix[style1=0.3cm] (1mat)
{
  & & & & & & &\\
  & & & & & & &\\
  & & & & & & &\\
  & & & & & & &\\
  & & & & & & &\\
  & & & & & & &\\
  & & & & & & &\\
  & & & & & & &\\
};
\draw[dashed]
  (1mat-4-1.south west) -- (1mat-4-8.south east);
\draw[dashed]
  (1mat-1-4.north east) -- (1mat-8-4.south east);
\draw[dashed]
  (1mat-5-8.north west) -- (1mat-7-8.south west);
\draw[dashed]
  (1mat-7-4.south east) -- (1mat-7-8.south west);
\node[font=\large] 
  at (1mat-2-2.south east) {$0$};
\node[font=\large] 
  at (1mat-2-6.south east) {$0$};
\node[font=\large] 
  at (1mat-6-2.south east) {$0$};
\node[font=\large] 
  at (1mat-6-6.center) {$B$};
\node[font=\large] 
  at (1mat-8-5.east) {$0$};
\node[font=\large] 
  at (1mat-8-7.center) {$\cdots$};
\node[font=\large] 
  at (1mat-8-8.center) {$0$};
\node[font=\large] 
  at (1mat-6-8.south) {$\vdots$};
\node[font=\large] 
  at (1mat-5-8.south) {$0$};
\end{tikzpicture}
\]
denoted by $M(B)$, where $B$ is the submatrix consisting of the rows $\overline{1}, \overline{2}, \dotsc, \overline{r-1}$ and columns $\overline{1}, \overline{2}, \dotsc, \overline{r-1}$. When $C$ is an $F$-algebra, $V(C)$ contains all $M(B)$ such that $B$ is a $(r-1) \times (r-1)$ skew-symmetric matrix with entries in $C$. Thus, there is a closed embedding from the scheme of $(r-1) \times (r-1)$ skew-symmetric matrices into $V$. Since $r$ is odd, $\overline{r}$ never appears as an entry of $T$ if $T$ is a canonical even-tableau with entries in $\bf{n}$. The rest of the proof is analogous to the case $r$ is even.

The proof of (2) closely follows [9, pp.506-508]. 
Recall that $Sp(n, F)$ acts on $V$: if $g \in Sp(n, F)$ and $Y \in V(C)$, $C$ is any $F$-algebra, (right) action is given by $Y \cdot g = g^{T}Yg$. There is an induced action of $Sp(n, F)$ on $F[V]$, which can be described as
$$(g \cdot f) (Y) = f (Y \cdot g) = f (g^{T}Yg)$$
for $g \in Sp(n, F)$ and $f \in F[V]$. For example, the action of $I_n + \mu \: E_{\overline{i}, i}$ ($\mu \in F$) on the matrix $Y$ is the transformation $i \rightarrow i + \mu \: \overline{i}$ applied to $Y$ on both rows and columns. Then the result of the induced action of  $I_n + \mu \: E_{\overline{i}, i}$ on $[i_1, i_2, \dotsc, i_l] \in F[V]$ depends on whether $i$ or $\overline{i}$ appear among the indices $i_1, i_2, \dotsc, i_l$. More precisely,
\begin{flalign*}
&(I_n + \mu \: E_{\overline{i}, i}) \cdot [i_1, i_2, \dotsc, i_l]\\ 
& \hspace{20mm} = 
\begin{cases}
[i_1, i_2, \dotsc, i_l] + \mu \: [i_1, i_2, \dotsc, i_{k-1}, \overline{i}, i_{k+1}, \dotsc, i_l] 
& \mbox{if } i = i_k  \mbox{ and } \overline{i} \mbox{ does not appear,} \\
[i_1, i_2, \dotsc, i_l] & \mbox{if } i \mbox{ does not appear,}\\
[i_1, i_2, \dotsc, i_l] & \mbox{if both } i \mbox{ and } \overline{i} \mbox{ appear.}
\end{cases}
\end{flalign*}

\noindent
Finally, if $I_n + \mu \: E_{\overline{i}, i}$ acts on $[T] \in F[V]$ for some even-tableau $T$, then we get a polynomial in $\mu$ (with coefficients in $F[V]$) of degree equal to the number of the rows of $T$ in which $i$ appears and $\overline{i}$ does not appear. The leading coefficient of this polynomial is $[T']$, where $T'$ is a tableau obtained from $T$ by replacing $i$ with $ \overline{i}$ in every row of $T$ containing $i$ but not $\overline{i}$.

Now assume that there is a dependence relation over $F$,
\[
\sum_{i=1}^{s} c_i [T_i] = 0,  \tag{3.5}  \label{3.5} 
\]
where the $T_i$ are distinct symplectic standard even-tableaux. Some $T_i$ is not canonical by (1), and therefore $T_i(u, v) \neq \overline{v}$ must hold for some $i$ and some $u, v$. Let $p$ be the entry $T_i(u, v)$ as small as possible with this property (with respect to the order $( \ref{3.1} )$), i.e.
$$p := \text{min} \{ T_i (u, v) \: | \: \exists \text{ a tableau } T_i \text{ in } (\ref{3.5})  \text{ and a position } (u, v) \text{ such that } T_i (u, v) \neq \overline{v} \}.$$
Then we define $j$ and $h$ in the following way:
 \begin{flalign*}
& j := \text{the minimum of the column indices where } p \text{ appear.}\\
&\hspace{2.5mm} =  \text{min} \{v \: | \: \exists \text{ a tableau } T_i \text{ and a position } (u, v) \text{ such that } T_i (u, v) = p \}\\
& h := \text{the maximum number of occurrences of } p \text{ in the column } j \text{ of some } T_i .
\end{flalign*}

\noindent
By rearranging the $T_i$, we may assume that $T_1, T_2, \dotsc, T_k$ $(k \leq s)$ are the tableaux which have the entry $p$ exactly $h$ times in their column $j$ (necessarily in consecutive rows). We divide into three cases.\\

\noindent \textit{Case}
$p \in \{ \overline{1}, \overline{2}, \dotsc, \overline{r} \}$: Say $p = \overline{q}$ and let 
$ I_n -\mu \: E_{q, j}+ \mu \: E_{\overline{j}, \overline{q}}$
 act on $\sum_{i=1}^{s} c_i [T_i]$. The action of
$\: I_n -\mu \: E_{q, j}+ \mu \: E_{\overline{j}, \overline{q}}$
on the generic matrix $Y$ is the transformation $j \rightarrow j - \mu \: q$ and $p \rightarrow p + \mu \: \overline{j}$ applied to $Y$ on both rows and columns. Since $j$ never appears as an entry in all tableaux $T_i$, $1 \leq i \leq s$, by the symplectic standardness of each $T_i$ and the minimality of $p$, only the transformation $p \rightarrow p + \mu \: \overline{j}$ matters.\\

\noindent \textit{Case}
$p = j$: Let $I_n + \mu \: E_{\overline{j}, j}$ act on $\sum_{i=1}^{s} c_i [T_i]$. Note that the action of
$I_n + \mu \: E_{\overline{j}, j}$
on the generic matrix $Y$ is the transformation $j \rightarrow j + \mu \: \overline{j}$ applied to $Y$ on both rows and columns. \\

\noindent \textit{Case}
$p \in \{ 1, 2, \dotsc, r \}$ with $p > j$: Let $I_n +\mu \: E_{\overline{j}, p}+ \mu \: E_{\overline{p}, j}$ act 
on $\sum_{i=1}^{s} c_i [T_i]$. The action of
$I_n +\mu \: E_{\overline{j}, p}+ \mu \: E_{\overline{p}, j}$
on the generic matrix $Y$ is the transformation $p \rightarrow p + \mu \: \overline{j}$ and $j \rightarrow j + \mu \: \overline{p}$ applied to $Y$ on both rows and columns. Again $j$ never appears as an entry in all tableaux $T_i$, so only the transformation $p \rightarrow p + \mu \: \overline{j}$ matters.\\

\noindent
In each case, the action on $\sum_{i=1}^{s} c_i [T_i]$ results in a polynomial in $\mu$ of degree $h$, whose leading coefficient is 
$\sum_{i=1}^{k} c_i [T_i ']$ 
where $T_i '$ is obtained from $T_i,\: 1 \leq i \leq k,$ by substituting $ \overline{j}$ for the entries $p$ in the column $j$. This polynomial has value 0 in $F[V]$ for all $\mu \in F$, so the leading coefficient must vanish by the following Lemma 3.10. It's not difficult to check that the tableaux $T_1', \dotsc, T_k'$ are symplectic standard and distinct from each other. This new relation $\sum_{i=1}^{k} c_i [T_i '] = 0$ has either bigger $p$ or same $p$ with bigger $j$ compared to the original relation $\sum_{i=1}^{s} c_i [T_i] = 0$. Hence, we are in an inductive procedure on $(p, j)$ which ends with a relation in which only canonical tableaux are involved. It contradicts (1) so we get the desired conclusion.
\end{proof}

\begin{lem}  
Assume $F$ is an infinite field and let $A$ be any $F$-algebra. If a polynomial $a_n x^n + \cdots + a_1 x + a_0 \in A[x]$ has more than $n$ distinct roots in $F$, then $a_n$ must be zero. (In fact, all $a_i$ are then zero by iterated use of this lemma.)
\end{lem}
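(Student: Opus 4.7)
The plan is to argue by induction on the degree $n$. The one feature that makes the statement nontrivial is that $A$ need not be an integral domain, so the classical bound ``a nonzero polynomial of degree $n$ has at most $n$ roots'' is not directly available. What saves the day is that $A$ is an $F$-algebra, so every nonzero element of $F$ is a unit in $A$; this lets us divide by monic linear polynomials $x - \alpha$ with $\alpha \in F$ and cancel factors $(\alpha_i - \alpha_j)$ freely.

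The base case $n=0$ is immediate: if the constant $a_0 \in A$ has any root $\alpha \in F$, substitution gives $a_0 = 0$. For the inductive step, assume the conclusion for degrees strictly less than $n$, and suppose $p(x) = a_n x^n + \cdots + a_0 \in A[x]$ has at least $n+1$ distinct roots $\alpha_0, \alpha_1, \ldots, \alpha_n$ in $F$. Since $x - \alpha_0$ is monic in $A[x]$, polynomial division produces a unique $q(x) \in A[x]$ of degree $n-1$ and leading coefficient $a_n$ with $p(x) = (x - \alpha_0)\, q(x)$. Evaluating at $\alpha_i$ for $i = 1, \ldots, n$ gives $(\alpha_i - \alpha_0)\, q(\alpha_i) = 0$, and because $\alpha_i - \alpha_0 \in F^{\times} \subset A^{\times}$, we deduce $q(\alpha_i) = 0$. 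Thus $q$ has $n > n - 1$ distinct roots in $F$, and the induction hypothesis yields $a_n = 0$.

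For the parenthetical remark, once $a_n = 0$ the polynomial equals $a_{n-1} x^{n-1} + \cdots + a_0$ and still has more than $n$ (hence more than $n-1$) distinct roots in $F$, so the lemma applied to this polynomial gives $a_{n-1} = 0$; iterating yields $a_i = 0$ for every $i$. There is no essential obstacle here; the only point to guard is the observation $F^{\times} \subset A^{\times}$, which is why the finer ring structure of $A$ never enters. (A Vandermonde determinant argument would yield the full conclusion $a_0 = \cdots = a_n = 0$ in one stroke, but the inductive division argument above matches the phrasing of the lemma more directly.)
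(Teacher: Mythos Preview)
Your proof is correct and follows essentially the same approach as the paper: induction on the degree, factoring out $x-\alpha$ for one chosen root $\alpha\in F$, and using $F^{\times}\subset A^{\times}$ to conclude that the remaining roots are roots of the quotient. The only cosmetic differences are that the paper starts the induction at $n=1$ and writes the factorization explicitly via $x^j-\alpha^j=(x-\alpha)\sum_{l=0}^{j-1}x^{l}\alpha^{j-1-l}$ rather than invoking division by a monic polynomial.
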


\begin{proof}
We induct on the degree of the polynomial. Suppose $a_1 x + a_0 \in A[x]$ has two distinct roots $x_1, x_2 \in F$. It follows that
\[
0 = (a_1 x_1 + a_0) - (a_1 x_2 + a_0) = a_1 (x_1 - x_2).
\]
Since $x_1 - x_2 \in F^{\times}$, clearly $a_1$ is equal to zero. \par
Now assume that the lemma holds true for all degree $k$ polynomials in $A[x]$. Suppose that a polynomial $a_{k+1} x^{k+1} + \cdots + a_1 x + a_0 \in A[x]$ has more than $k+1$ distinct roots in $F$, and pick a root $\alpha \in F$. Then
\begin{flalign*}
a_{k+1} x^{k+1} + \cdots + a_2 x^2 + a_1 x + a_0 
&= a_{k+1} x^{k+1} + \cdots +  a_2 x^2 + a_1 x + (-a_{k+1} {\alpha}^{k+1} - \cdots -  a_2 \alpha^2 - a_1 \alpha)\\
&= a_{k+1} (x^{k+1}-\alpha^{k+1}) + \cdots + a_2 (x^2 - \alpha^2) + a_1 (x - \alpha)\\
&= (x-\alpha) (a_{k+1} \frac{x^{k+1}-\alpha^{k+1}}{x-\alpha} + \cdots + a_2 \frac{x^2 - \alpha^2}{x - \alpha} + a_1).
\end{flalign*}
Note that the second factor is a polynomial of degree $k$ with leading coefficient $a_{k+1}$. For any root $\beta$ of $a_{k+1} x^{k+1} + \cdots + a_1 x + a_0$ with $\beta \in F, \beta \neq \alpha$, evidently $\beta$ must also be a root of the second factor. Hence this degree $k$ polynomial has more than $k$ distinct roots in $F$, and therefore $a_{k+1} = 0$ by the induction hypothesis.
\end{proof}

Combining Proposition 3.7 and Proposition 3.9\:(2), we come to the following conclusion:

\begin{thm}  
When $F$ is a field with $\mbox{char } F = 0$ or $\mbox{char } F > r$, there is an $F$-basis for $F[V]$ indexed by the symplectic standard even-tableaux.
\end{thm}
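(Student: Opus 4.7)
The plan is essentially to observe that Theorem 3.11 follows immediately by combining the two results established in the preceding discussion, so the task is to explain the assembly rather than to introduce any new argument. First I would note that under the hypothesis $\mbox{char }F = 0$ or $\mbox{char }F > r$, Proposition 3.7 already gives that the family $\{[T] \mid T \text{ is a symplectic standard even-tableau}\}$ spans $F[V]$ over $F$; this is where the characteristic assumption is actually used, since the proof of Proposition 3.7 invokes the straightening-type relation of Proposition 2.8, which in turn required $2^m m!\, t! \in F^\times$ to clear denominators in Lemma 2.7.

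Next I would invoke Proposition 3.9(2), which holds over a field of arbitrary characteristic and states that the map $T \mapsto [T]$ from symplectic standard even-tableaux into $F[V]$ is injective with linearly independent image. Taken together, spanning plus linear independence yields that $\{[T] \mid T \text{ is a symplectic standard even-tableau}\}$ is an $F$-basis of $F[V]$, which is precisely the content of the theorem.

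Since both ingredients are already in place, there is no genuine obstacle; the only thing to be careful about is to flag that the characteristic restriction in the statement is inherited from the spanning half (Proposition 3.7) and is not needed for the independence half (Proposition 3.9(2)). The proof itself can therefore be written in a couple of sentences, simply citing Propositions 3.7 and 3.9(2) and noting that together they give a basis indexed by symplectic standard even-tableaux.
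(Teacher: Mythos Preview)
Your proposal is correct and matches the paper's own approach exactly: the paper simply states that the theorem follows by combining Proposition 3.7 (spanning) and Proposition 3.9(2) (linear independence). Your additional remark about where the characteristic hypothesis enters is accurate and a nice clarification, though the paper itself does not spell this out.
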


\begin{cor}  
Over any $F$-algebra $R$, there is an $R$-basis for $R[V]$ indexed by the symplectic standard even-tableaux.
\end{cor}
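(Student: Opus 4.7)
The plan is to deduce this corollary from Theorem 3.11 via a flat base change, exploiting that the defining equations of $V$ in (1.1) have integer (hence $F$-rational) coefficients and therefore describe compatible schemes over $F$ and over any $F$-algebra $R$.

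First I would make the ideal-theoretic setup explicit. The conditions $Y^TJY = 0$ and the vanishing of the nontrivial coefficients of $\mbox{char}_{(-JY)}(T)$ translate, once the skew-symmetry and zero-diagonal conditions are built into the choice of generators, into a finite list of polynomials in $\mathbb{Z}[Y_{ij}]_{i<j}$. Writing $I_F \subset F[Y_{ij}]_{i<j}$ and $I_R \subset R[Y_{ij}]_{i<j}$ for the ideals generated by these same polynomials, one has $F[V] = F[Y_{ij}]_{i<j}/I_F$ and $R[V] = R[Y_{ij}]_{i<j}/I_R$, and $I_R$ coincides with the extension of $I_F$ under the inclusion $F[Y_{ij}]_{i<j} \hookrightarrow R[Y_{ij}]_{i<j}$.

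Next I would apply the right exact functor $R \otimes_F -$ to the short exact sequence $0 \to I_F \to F[Y_{ij}]_{i<j} \to F[V] \to 0$. Since the polynomial ring $F[Y_{ij}]_{i<j}$ is free over $F$ and $R[Y_{ij}]_{i<j} = R \otimes_F F[Y_{ij}]_{i<j}$, and since the image of $I_F \otimes_F R$ generates exactly $I_R$ by the previous paragraph, this yields a canonical isomorphism of $R$-algebras $R \otimes_F F[V] \cong R[V]$ sending $1 \otimes [T]$ to $[T]$ for every even-tableau $T$.

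The last step is immediate. By Theorem 3.11, the elements $[T] \in F[V]$ indexed by symplectic standard even-tableaux form an $F$-basis of $F[V]$, so $F[V]$ is free as an $F$-module with that indexing set. Extension of scalars along $F \to R$ takes a free $F$-module with basis $\{[T]\}$ to a free $R$-module with basis $\{1 \otimes [T]\}$, which transports through the isomorphism of the preceding step to the pfaffian products $\{[T]\} \subset R[V]$. There is no genuine obstacle beyond checking that the defining ideal is obtained from the same set of polynomials over $F$ and over $R$; once that is observed, the basis claim is a formal consequence of base change for free modules.
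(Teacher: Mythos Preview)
Your argument is correct and is precisely the base change from $F$ to $R$ that the paper invokes in its one-line proof; you have simply spelled out the standard details (that the defining ideal over $R$ is the extension of the one over $F$, so $R[V]\cong R\otimes_F F[V]$, and that tensoring a free module preserves a basis). There is nothing to add or correct.
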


\begin{proof}
It is an easy consequence of Theorem 3.11 and the base change from $F$ to $R$.
\end{proof}

\begin{rmk}  
When $V$ is defined over a field $F$ with $\mbox{char } F = 0$ or $\mbox{char } F > r$, the condition $\mbox{char}_{(-JY)}(T) = T^n$ imposed on V can be replaced by the weaker condition 
\[ 
\mbox{tr}(-JY) = 2 \sum_{i=1}^r Y_{i \overline{i}} = 0. \tag{3.6} \label{3.6}
\]
Let $V'$ be the scheme of $n \times n$ matrices $Y$ defined by the conditions $Y = -Y^T, \: Y^T JY = 0,$ and $( \ref{3.6})$. In Section 2 and Section 3, we never used the condition $\mbox{char}_{(-JY)}(T) = T^n$ itself except for the weaker one $( \ref{3.6})$. Hence if we start with $V'$ instead of $V$, we can show that there is an $F$-basis for $F[V']$ indexed by the symplectic standard even-tableaux by the same argument. Then it's clear that $F[V'] = F[V]$.
\end{rmk}

When $R$ is $\mathbb{Z}$ or $\mathbb{Q}$, let $I_R$ denote the ideal of the polynomial ring $R [Y_{ij} : 1 \leq i, j \leq n]$ generated by the conditions $(\ref{1.1})$, respectively. By definition, $R[V] = R[Y_{ij}] \big/ I_R$. It's not difficult to see that a polynomial $f \in \mathbb{Z}[Y_{ij}]$ is in $I_\mathbb{Q}$ if and only if for some nonzero integer $m$, $mf$ is in $I_\mathbb{Z}$. In other words, if we consider $\mathbb{Z}[V]$ as a $\mathbb{Z}$-module, $I_\mathbb{Q} \cap \mathbb{Z}[Y_{ij}] \big/ I_\mathbb{Z}$ is the torsion submodule of $\mathbb{Z}[V]$, and the quotient module 
\[
S :=  \mathbb{Z}[V] \big/ (I_\mathbb{Q} \cap \mathbb{Z}[Y_{ij}] \big/ I_\mathbb{Z})  \cong \mathbb{Z}[Y_{ij}] \big/ I_\mathbb{Q} \cap \mathbb{Z}[Y_{ij}]
\]
is torsion-free. Note that the morphism $S \rightarrow \mathbb{Q}[V]$ (induced from $\mathbb{Z}[Y_{ij}] \rightarrow \mathbb{Q}[Y_{ij}]$) is injective, and hence we can regard  $S$ as a subring of $\mathbb{Q}[V]$.

\begin{prop}  
There is a $\mathbb{Z}$-basis for $S$ indexed by the symplectic standard even-tableaux.
\end{prop}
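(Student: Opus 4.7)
The plan is to show that the image of every standard even-tableau product $[T]$ in $S$ lies in the $\mathbb{Z}$-submodule $M\subseteq S$ generated by the elements $[T]$ for $T$ a symplectic standard even-tableau; combined with $\mathbb{Z}$-linear independence of these generators, this will give the desired basis. Independence is automatic: $S$ embeds in $\mathbb{Q}[V]$, and by Theorem 3.11 the elements $[T]$ associated to symplectic standard even-tableaux are already $\mathbb{Q}$-linearly independent in $\mathbb{Q}[V]$, hence $\mathbb{Z}$-linearly independent in $S$.

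For spanning, the starting point is Theorem 3.3(1), which provides a $\mathbb{Z}$-basis of $\mathbb{Z}[Y_{ij}]_{i<j}$ consisting of products $[T]$ indexed by standard even-tableaux, so the images of these products already $\mathbb{Z}$-span $\mathbb{Z}[V]$ and therefore $S$. It thus suffices to show that in $S$, every standard even-tableau product $[T]$ lies in $M$. I plan to do this by downward induction on the type of $T$ in the $\trianglelefteq$ ordering (only finitely many types occur in each degree), running the argument of Proposition 3.7 inside $S$. If $T$ is already symplectic standard, there is nothing to prove; otherwise, the position $(l,h)$ chosen as in the proof of Proposition 3.7 lets one apply the integer-coefficient identity (3.4) of Proposition 2.8 to row $l$, producing an expression of $[T]$ in $S$ as a $\mathbb{Z}$-linear combination of tableaux $[T_{\Gamma'}]$, each of strictly larger type. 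Invoking Remark 3.5 (equivalently Theorem 3.3(2), whose straightening relations are integer polynomial identities in $\mathbb{Z}[Y_{ij}]_{i<j}$, and hence descend to $S$), each $T_{\Gamma'}$ rewrites as a $\mathbb{Z}$-combination of standard even-tableaux of the same enlarged type, to which the inductive hypothesis applies.

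The one point that requires care, and which would be the main obstacle if $S$ were defined in some other way, is the availability of (3.4) with integer coefficients in $S$: the derivation of (3.4) in Section 2 invoked division by $2^m m!\,t!$, which is why it was stated only over a field with $\operatorname{char} F = 0$ or $\operatorname{char} F > r$. This obstacle dissolves once we note that the identity is, nonetheless, an integer-coefficient identity holding in $\mathbb{Q}[V]$, and $S$ is by construction a subring of $\mathbb{Q}[V]$; the relation therefore transfers to $S$ for free. With (3.4) and Theorem 3.3(2) both available with integer coefficients in $S$, the inductive procedure terminates and yields $[T]\in M$, completing the proof.
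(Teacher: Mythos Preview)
Your argument is correct, but it takes a genuinely different route from the paper. The paper does not rerun the spanning algorithm over $S$; instead it picks an arbitrary $f\in S\subset\mathbb{Q}[V]$, expands $f=\sum c_i[T_i]$ in the $\mathbb{Q}$-basis of symplectic standard even-tableaux furnished by Theorem~3.11, and then argues that every $c_i$ must be an integer. The key step is: if $l$ is the least common denominator of the $c_i$ and some prime $p$ divides $l$, then reducing $lf=\sum (lc_i)[T_i]$ modulo $p$ yields a nontrivial $\mathbb{F}_p$-linear relation among the $[T_i]$ in $\mathbb{F}_p[V]$, contradicting Proposition~3.9(2) (which was proved in \emph{arbitrary} characteristic). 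Thus the paper's proof crucially exploits the characteristic-free linear independence statement, which you do not use at all.

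Your approach, by contrast, observes that the straightening relations of Theorem~3.3(2) and the identity~(2.5)/(3.4) of Proposition~2.8 are all integer-coefficient identities valid in $\mathbb{Q}[V]$, hence automatically valid in the subring $S$, so the inductive rewriting of Proposition~3.7 can be carried out verbatim inside $S$. This is more constructive and avoids any appeal to positive characteristic, at the cost of revisiting those earlier proofs. The paper's argument is shorter and makes elegant use of Proposition~3.9(2), but yours has the conceptual advantage of showing directly that the straightening procedure itself is integral once one works in the torsion-free quotient.
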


\begin{proof}
It suffices to show that $S$ is the $\mathbb{Z}$-span of the set $\{[T] \in \mathbb{Q}[V] : T \text{ is a symplectic standard} \text{ even-tableau}\}$ in $\mathbb{Q}[V]$. Pick any nonzero $f \in S$. By Theorem 3.11, we can write $f$ as
\[
 f  = \sum_{i = 1}^{k} c_i [T_i],
\]
\noindent
where the $T_i$ are symplectic standard even-tableaux and coefficients $c_i$ are taken in $\mathbb{Q}$. We show that $c_i$ are actually in $\mathbb{Z}$. Express each $c_i$ in a reduced form $c_i = a_i / b_i$ with $a_i, b_i \in \mathbb{Z}, b_i \neq 0$, and let $l$ be the least common multiple of $b_1, b_2, \cdots, b_k$. Multiplying $l$ to both sides of the equation, we get an expression with coefficients in $\mathbb{Z}:$
\[
lf  = \sum_{i = 1}^{k} (l c_i) [T_i].
\]

\noindent
 Suppose $l>1$ and pick a prime factor $p$ of $ l$. Modulo $p$ we get a nontrivial relation, which contradicts Proposition 3.9\:(2). Hence $l$ must be equal to $1$, and this proves the proposition.
\end{proof}

When $F = \mathbb{C}$, there is an alternative way to prove Proposition 3.9 (2). For any even-tableau $T$, the element $[T]$ is  homogeneous in the graded ring $\mathbb{C}[V]$. Thus, it suffices to prove that the map from the set 
\[
\{T \; | \; T \text{ is a symplectic standard even-tableau whose shape is a partition of } 2m \}.  \tag{3.7}  \label{3.7}
\] 
 into $F[V]_m$, given by $T \mapsto [T]$, is injective and its image in $F[V]_m$ is linearly independent over $F$. 
Given a partition $\lambda$ (shape of a tableau), we use the following notations:
\begin{flalign*}
&\mbox{row}(i, \lambda) := \mbox{ the size of row } i \mbox{ of } \lambda \mbox{ and possibly equal to zero if }  \lambda  \mbox{ does not have row } i,\\
&\mbox{col}(j, \lambda) := \mbox{ the size of column } j \mbox{ of } \lambda \mbox{ and possibly equal to zero if } \lambda  \mbox{ does not have column } j.
\end{flalign*}

\begin{prop}  
Let $\mathcal{P}_m$ be the set of all partitions $\lambda$ of 2m whose row$(i, \lambda)$ is even and $\leq r$ for all $i$. The degree $m$ homogeneous part of $\mathbb{C} [V]$ can be decomposed as
\[
\mathbb{C} [V]_m = \bigoplus_{\lambda \in \mathcal{P}_m} L_{\lambda} E,
\]
\noindent
where $L_{\lambda} E$ denotes the irreducible representation of $Sp(n, \mathbb{C})$ whose highest weight is 
$$ col(1, \lambda), col(2, \lambda), \dotsc, col(n, \lambda). $$
\end{prop}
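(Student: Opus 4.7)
The plan is to prove the decomposition via a character identity on the standard maximal torus of $Sp(n,\mathbb{C})$: since $\mathbb{C}[V]_m$ is a finite-dimensional (Theorem 3.11) rational $Sp(n,\mathbb{C})$-module, it is completely reducible, so it suffices to match its character with $\sum_{\lambda \in \mathcal{P}_m} \text{char}(L_\lambda E)$ and invoke that characters determine rational representations of the connected reductive group $Sp(n,\mathbb{C})$ up to isomorphism.

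First, I would compute $\text{char}(\mathbb{C}[V]_m)$ from the basis supplied by Theorem 3.11. Under the action $Y \mapsto g^T Y g$ of a diagonal symplectic matrix $g = \text{diag}(t_1, \ldots, t_r, t_1^{-1}, \ldots, t_r^{-1})$, the coordinate $Y_{ij}$ is a weight vector with weight $\chi_i \chi_j$, where $\chi_i = t_i$ for $i \leq r$ and $\chi_{\overline{i}} = t_i^{-1}$. Hence by multilinearity of pfaffians every $[i_1, \ldots, i_{2l}]$ is a weight vector with weight $\chi_{i_1} \cdots \chi_{i_{2l}}$, and every basis element $[T]$ is a weight vector whose weight is the monomial $\prod_{i \in T} \chi_i$ (read off from the entries of $T$). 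Summing over the basis gives
\begin{align*}
\text{char}(\mathbb{C}[V]_m) = \sum_{\substack{T \text{ symp.\ std.\ even-tab.} \\ \text{shape}(T) \in \mathcal{P}_m}} \prod_{i \in T} \chi_i.
\end{align*}

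Next, I would invoke King's character formula: the irreducible $L_\lambda E$ with highest weight $(\text{col}(1,\lambda),\ldots,\text{col}(r,\lambda))$ has character $\sum_{T'} \prod_{i \in T'} \chi_i$, where $T'$ ranges over the paper's symplectic standard tableaux of shape $\lambda$ (with the highest weight matching up as stated after the transposition-of-shape convention noted in Remark 3.4). Summing over $\lambda \in \mathcal{P}_m$ partitions all symplectic standard even-tableaux with shape in $\mathcal{P}_m$ by their shape, so the right-hand side above is reproduced exactly. Combined with complete reducibility, this yields the desired decomposition $\mathbb{C}[V]_m \cong \bigoplus_{\lambda \in \mathcal{P}_m} L_\lambda E$.

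The main obstacle is the bookkeeping needed to reconcile the paper's tableau-shape convention with King's, so that the irreducible associated (via King) to the paper's symplectic standard tableaux of shape $\lambda$ is indeed the one with highest weight $(\text{col}(1,\lambda),\ldots,\text{col}(r,\lambda))$ rather than with highest weight $\lambda$; this is precisely the transposition alluded to in Remark 3.4. Once that dictionary is set up, the remainder is a routine weight-vector computation and character comparison.
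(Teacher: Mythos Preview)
Your argument is correct but follows a genuinely different route from the paper's. The paper proceeds by (i) exhibiting for each $\lambda \in \mathcal{P}_m$ an explicit $U$-invariant vector $g_\lambda = \prod_i [1,2,\ldots,\mathrm{row}(i,\lambda)]$ of the required weight, thereby obtaining the inclusion $\bigoplus_\lambda L_\lambda E \subseteq \mathbb{C}[V]_m$, and then (ii) bounding $\dim \mathbb{C}[V]_m$ from above using only the spanning result of Proposition~3.7 together with Donkin's identity $\dim L_\lambda E = \#\{\text{symplectic standard tableaux of shape }\lambda\}$ from~[5]. Your route instead takes Theorem~3.11 as input, reads off the torus character of $\mathbb{C}[V]_m$ from the weight of each basis element $[T]$, and matches it against King's character formula summed over $\mathcal{P}_m$. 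The paper's approach buys logical independence from Proposition~3.9(2): since its proof of the proposition uses only Propositions~3.7 and~3.9(1), the decomposition then \emph{re-derives} the linear independence of the $[T]$ over $\mathbb{C}$, which is exactly why the paper includes it (``an alternative way to prove Proposition~3.9(2)''). Your approach is cleaner but forfeits this, as Theorem~3.11 already rests on~3.9(2); it also does not locate the highest weight vectors explicitly. One small correction: the row/column transposition you invoke is the remark immediately preceding Definition~3.1, not Remark~3.4 (which concerns the alternative order $<$ on~$\mathbf{n}$).
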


\begin{proof}
We first show the inclusion
\[
\mathbb{C} [V]_m \; \supseteq \bigoplus_{\lambda  \in \mathcal{P}_m} L_{\lambda} E,
\]
and the proof is similar to that of [10, Prop.2.3.8\:(b)]. For each even number $s,\: 0 \leq s \leq r$, we define $g_{s} := [1, 2, \dotsc, s] \in \mathbb{C} [V]$. Let U be the subgroup of $Sp(n, \mathbb{C})$ consisting of all upper triangular matrices with 1's on the diagonal. It's immediate that for each $s$, $g_s$ is a $U$-invariant of the weight $(1^s, 0^{n-s})$. Now for any partition $\lambda \in \mathcal{P}_m$, we see that $g_{\lambda} := \prod_i g_{row(i, \lambda)}$ is a nonzero $U$-invariant of the weight
$$ col(1, \lambda), col(2, \lambda), \dotsc, col(n, \lambda), $$
and therefore $\mathbb{C} [V]_m \; \supseteq  L_{\lambda} E$. 
(By a slight modification of the proof of Proposition 3.9(1), we can show that $g_\lambda$ is nonzero in $\mathbb{C} [V]$.) \par

From [5], we know that
\begin{center}
$dim_{\mathbb{C}}(L_{\lambda} E)$ = the number of symplectic standard even-tableaux of shape $\lambda$
\end{center}
\noindent
 for every $\lambda \in \mathcal{P}_m$. Note that if $T$ is in $(\ref{3.7})$, then the shape of $T$ is in $ \mathcal{P}_m$. By the proof of Proposition 3.7, the set $(\ref{3.3})$ spans $\mathbb{C} [V]_m$. It follows that
\[
dim_{\mathbb{C}}(\mathbb{C} [V]_m) \leq \sum_{\lambda \in \mathcal{P}_m} dim_{\mathbb{C}}(L_{\lambda} E),
\]
and therefore the equality
\[
\mathbb{C} [V]_m = \bigoplus_{\lambda \in \mathcal{P}_m} L_{\lambda} E. 
\]
This also proves that the set $(\ref{3.3})$ is a $\mathbb{C}$-basis of $\mathbb{C} [V]_m$.
\end{proof}


\section{Reducedness of the coordinate ring}

We assume that $n=2r$ is a multiple of $4$ and that $F$ is a field with $\mbox{char } F = 0$ or $\mbox{char } F > r$. Then Remark 3.11 implies 
\[
F[V] = F[Y_{i j} : 1 \leq i, j \leq n] \big/ ( Y^{T} JY, \: Y+Y^T, \: \sum_{i=1}^{r} Y_{i \overline{i}} ),
\]
where $Y = (Y_{ij})$ denotes $n \times n$ generic matrix. We define $f$ as the minor of $Y$ consisting of the rows  $\overline{1}, \overline{2}, \dotsc, \overline{r}$ and columns $\overline{1}, \overline{2}, \dotsc, \overline{r}$.

\begin{lem}  
The element $f$ is not a zero divisor in $F[V]$.
\end{lem}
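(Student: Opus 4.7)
The plan is to show that multiplication by $f$ maps the basis of $F[V]$ produced in Theorem 3.11 injectively into itself, up to sign, which will immediately preclude $f$ from being a zero divisor. The key observation is that $f$ is \emph{already} a product of pfaffians indexed by a symplectic standard even-tableau. Explicitly, $f$ is the determinant of the principal $r \times r$ submatrix of the skew-symmetric matrix $Y$ on the row/column set $\{\overline{1},\dotsc,\overline{r}\}$. Since $n=2r$ is divisible by $4$, the integer $r$ is even, and so this determinant is the square of a pfaffian, giving
\[
f = [\overline{1},\overline{2},\dotsc,\overline{r}]^{2}.
\]

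Next I would show how $f$ acts on basis elements. For any symplectic standard even-tableau $T$, let $T'$ be the tableau obtained by prepending two rows, each equal to $(\overline{1},\overline{2},\dotsc,\overline{r})$, on top of $T$. Because every row of $T$ has length at most $r$ (by the definition of symplectic standard) and the new rows have length exactly $r$, the shape of $T'$ is still a partition. By commutativity of the pfaffian factors in the definition of $[T]$, we get the identity $[T'] = f\cdot [T]$ in $F[V]$.

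I would then verify that $T'$ is again a symplectic standard even-tableau: the new rows are strictly increasing because $\overline{1}\prec \overline{2}\prec \cdots \prec \overline{r}$ under the order (\ref{3.1}); the columns remain weakly increasing because in column $j$ of any symplectic standard tableau the only entries that may appear are $\overline{j},j,\overline{j+1},j+1,\dots,\overline{r},r$, and $\overline{j}$ is the $\prec$-minimum of this set; and all row lengths remain even, since $r$ is even. The assignment $T\mapsto T'$ is clearly injective (we recover $T$ from $T'$ by deleting the first two rows), so by Theorem 3.11 the family $\{[T']\}_{T}$ is linearly independent in $F[V]$.

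With this in hand the conclusion is immediate. If $h\in F[V]$ satisfies $fh=0$, expand $h=\sum_{T} c_{T}[T]$ in the basis of Theorem 3.11; then $0 = fh = \sum_{T} c_{T}[T']$, and linear independence of $\{[T']\}_{T}$ forces every $c_{T}=0$, whence $h=0$. The only point requiring genuine care in this plan is the verification that $T'$ is symplectic standard, and specifically the column-compatibility check at the seam between the prepended rows and the top of $T$; this reduces, as noted, to the minimality of $\overline{j}$ in column $j$ and is not a serious obstacle.
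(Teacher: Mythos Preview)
Your proof is correct and follows essentially the same approach as the paper: both identify $f=[\overline{1},\dotsc,\overline{r}]^2$ (using that $r$ is even), then observe that multiplying a symplectic standard even-tableau element by $[\overline{1},\dotsc,\overline{r}]$ (the paper) or by $f$ (you) amounts to prepending one or two copies of the row $(\overline{1},\dotsc,\overline{r})$, yielding another symplectic standard even-tableau, and conclude via the basis of Theorem~3.11. The only cosmetic difference is that the paper reduces first to the single pfaffian factor, whereas you work directly with $f$.
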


\begin{proof}
This follows the proof of [6, Cor.4.4]. It suffices to show that the pfaffian $[\overline{1}, \overline{2}, \dotsc, \overline{r}]$ is not a zero divisor in $F[V]$ since $f$ is the square of $[\overline{1}, \overline{2}, \dotsc, \overline{r}]$. ($[\overline{1}, \overline{2}, \dotsc, \overline{r}] = 0$ if $r$ is odd, so we need the assumption that $r$ is even.) The one row tableau associated to $[\overline{1}, \overline{2}, \dotsc, \overline{r}]$ is obviously symplectic standard. Furthermore, for any symplectic standard even-tableau $T$, the product
$$ [\overline{1}, \overline{2}, \dotsc, \overline{r}] \cdot [T] $$
\noindent
is again associated to a symplectic standard even-tableau. Since the elements of $F[V]$ indexed by the symplectic standard even-tableaux form a basis of $F[V]$, this clearly implies that $[\overline{1}, \overline{2}, \dotsc, \overline{r}]$ is not a zero divisor.
\end{proof}

\begin{lem}  
The scheme $\text{Spec} \: F[V]_f$ is isomorphic to an open subscheme of $\mathbb{A}^{r^2}$.
\end{lem}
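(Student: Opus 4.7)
The idea is to parametrize $\operatorname{Spec} F[V]_f$ explicitly by splitting the generic matrix $Y$ into $r\times r$ blocks and using invertibility of the lower-right block on the open locus to eliminate the upper-left block entirely. Write
\[
Y = \begin{pmatrix} A & B \\ -B^T & D \end{pmatrix},
\]
where $A, D$ are $r\times r$ skew-symmetric and $B$ is an arbitrary $r\times r$ matrix (using $Y = -Y^T$). In these coordinates $J = \left(\begin{smallmatrix} 0 & I_r \\ -I_r & 0 \end{smallmatrix}\right)$ and $f = \det D$, so $D$ becomes invertible after localization.

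First I would expand $Y^T J Y = 0$ (equivalently $YJY = 0$, since $Y^T = -Y$) block-by-block. This produces exactly four relations:
\[
AD = B^2, \qquad DA = (B^T)^2, \qquad BA + AB^T = 0, \qquad DB + B^T D = 0.
\]
On $\operatorname{Spec} F[V]_f$ I then set $A := B^2 D^{-1}$ using the first relation. A short check exploiting $DB = -B^T D$ (equivalently $D^{-1} B^T = -B D^{-1}$) shows that $A = -A^T$ holds automatically and that the middle two relations are consequences of the fourth. Hence on $(B, D)$ the only surviving relation is $DB + B^T D = 0$, which says precisely that $C := DB$ is a symmetric matrix.

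Next, Remark 3.13 lets me replace the characteristic-polynomial condition by the single trace condition $\operatorname{tr}(B) = 0$. But $\operatorname{tr}(B) = \operatorname{tr}(D^{-1} C)$ with $D^{-1}$ skew-symmetric and $C$ symmetric, and in any characteristic different from $2$ one has $\operatorname{tr}(\text{skew} \cdot \text{symmetric}) = 0$, so this condition holds automatically on the open locus. Consequently the assignment $(D, C) \mapsto Y$ with $B := D^{-1}C$ and $A := D^{-1} C D^{-1} C D^{-1}$, together with its inverse $Y \mapsto (D, DB)$, identifies $\operatorname{Spec} F[V]_f$ with the open subscheme
\[
\{(D, C) \, : \, D \text{ skew-symmetric}, \; C \text{ symmetric}, \; \det D \neq 0\} \; \subset \; \operatorname{Spec} F[\text{skew}_r \oplus \operatorname{Sym}_r] \; \cong \; \mathbb{A}^{r^2},
\]
whose ambient dimension is $\binom{r}{2} + \binom{r+1}{2} = r^2$.

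The argument is essentially block-matrix bookkeeping, and the only non-tautological input is the automatic vanishing of $\operatorname{tr}(B)$ on the open locus. This is precisely what makes the ambient affine space come out to exactly $\mathbb{A}^{r^2}$ rather than a proper hyperplane, and it is where the hypothesis $\operatorname{char} F \neq 2$ (a consequence of $\operatorname{char} F = 0$ or $> r$) does real work beyond its use in Remark 3.13.
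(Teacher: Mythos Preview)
Your argument is correct and reaches the same endpoint as the paper---the open locus of pairs (skew-symmetric, symmetric) of $r\times r$ matrices with the skew-symmetric one invertible---but by a genuinely different and more elementary route. The paper embeds the problem into two affine charts of the Grassmannian $Gr(n,2n)$: one chart gives the matrix $Y$ directly, the other (call its coordinates $Z$) is chosen so that the conditions $Y^TJY=0$ and $Y+Y^T=0$, transported through the gluing map, become \emph{linear} conditions on $Z$ (namely $A^T=A$, $B=0$, $C^T=-C$, $D=-A$ in the paper's block notation). This immediately exhibits the quotient as a polynomial ring, after which the trace condition is checked to vanish automatically via a cofactor computation.

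You bypass the Grassmannian entirely. Working inside the skew-symmetric locus, you expand $YJY=0$ into four block equations, use invertibility of $D$ to eliminate $A$ via $A=B^2D^{-1}$, reduce the remaining content to the single relation $DB+B^TD=0$, and then linearize by the substitution $C=DB$. What your approach buys is transparency: every step is an explicit manipulation of $r\times r$ blocks, and no external geometry is invoked. What the paper's approach buys is a conceptual explanation for \emph{why} the relations linearize---it is the standard fact that Lagrangian/isotropic conditions become linear in a suitable Grassmannian chart---and the Grassmannian viewpoint connects the lemma to the local-model context that motivates the paper. Your verification that $\operatorname{tr}(D^{-1}C)=0$ for $D^{-1}$ skew and $C$ symmetric is exactly the same phenomenon as the paper's cofactor identity $C^{i,j}=-C^{j,i}$, just packaged differently.

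One expository point: when you say ``the middle two relations are consequences of the fourth,'' you are (correctly) using both the first relation (to define $A$) and the fourth; it would read more cleanly to say they follow from the first and fourth together.
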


\begin{proof}
Consider two affine open subschemes of the Grassmannian scheme $Gr(n, 2n)$ where we can represent the $n$-dimensional subspaces as 
\[
\renewcommand{\arraystretch}{1.5}
M =
\left[
\begin{array}{ccc}
Y_{1, 1} &  \cdots & Y_{1, n} \\
\vdots &  & \vdots \\
Y_{n, 1} & \cdots & Y_{n, n} \\
\hline
1 & & \\
& \ddots & \\
& & 1 \\
\end{array}
\right], \\
\quad \quad
N = 
\left[
\begin{array}{ccc}
Z_{1, 1} &  \cdots & Z_{1, n} \\
\vdots &  & \vdots \\
Z_{r, 1} & \cdots & Z_{r, n} \\
\hline
1 & & \\
& \ddots & \\
& & 1 \\
\hline
Z_{r+1, 1} &  \cdots & Z_{r+1, n} \\
\vdots &  & \vdots \\
Z_{n, 1} & \cdots & Z_{n, n} \\
\end{array}
\right].
\]

\noindent
Let $H$ be the $n \times n$ submatrix of  $M$ given by

\[
\renewcommand{\arraystretch}{1.5}
H =
\left[
\begin{array}{cccccc}
Y_{r+1, 1} &  \cdots & Y_{r+1, r} & Y_{r+1, r+1} &  \cdots & Y_{r+1, n}  \\
\vdots &  & \vdots & \vdots &  & \vdots \\
Y_{n, 1} &  \cdots & Y_{n, r} & Y_{n, r+1} &  \cdots & Y_{n, n}  \\
\hline
1 & & & & & \\
& \ddots & & & &\\
& & 1 & & &\\
\end{array}
\right],
\]
\noindent
and $G$ be the $n \times n$ submatrix of $N$ defined as
\[
\renewcommand{\arraystretch}{1.5}
G =
\left[
\begin{array}{cccccc}
& & & 1& & \\
& & & & \ddots &\\
& & & & & 1 \\
\hline
Z_{r+1, 1} &  \cdots & Z_{r+1, r} & Z_{r+1, r+1} &  \cdots & Z_{r+1, n}  \\
\vdots &  & \vdots & \vdots &  & \vdots \\
Z_{n, 1} &  \cdots & Z_{n, r} & Z_{n, r+1} &  \cdots & Z_{n, n}  \\
\end{array}
\right].
\]
\noindent
Let $h$ and $g$ denote the determinants of $H$ and $G$ respectively. We have a ring isomorphism between localizations $F [Y_{i, j}]_{h}$ and  $F [Z_{i, j}]_{g}$ matching $Y_{i, j}$ to the entry $(i, j)$ of the following product of matrices:

\[
\renewcommand{\arraystretch}{1.5}
\left[
\begin{array}{cccccc}
Z_{1, 1} &  \cdots & Z_{1, r} & Z_{1, r+1} &  \cdots & Z_{1, n}  \\
\vdots &  & \vdots & \vdots &  & \vdots \\
Z_{r, 1} &  \cdots & Z_{r, r} & Z_{r, r+1} &  \cdots & Z_{r, n}  \\
\hline
1 & & & & & \\
& \ddots & & & &\\
& & 1 & & &\\
\end{array}
\right] \\
\left[
\begin{array}{cccccc}
& & & 1& & \\
& & & & \ddots &\\
& & & & & 1 \\
\hline
Z_{r+1, 1} &  \cdots & Z_{r+1, r} & Z_{r+1, r+1} &  \cdots & Z_{r+1, n}  \\
\vdots &  & \vdots & \vdots &  & \vdots \\
Z_{n, 1} &  \cdots & Z_{n, r} & Z_{n, r+1} &  \cdots & Z_{n, n}  \\
\end{array}
\right]^{-1}.
\]
\noindent
(The morphism $F [Y_{i, j}]_{h} \xrightarrow{\sim} F [Z_{i, j}]_{g}$ is a glueing datum of the affine open subschemes of $Gr (n, 2n)$.)

Remember that $I_n$ denotes $n \times n$ identity matrix and $O_n$ denotes $n \times n$ zero matrix. Consider the following conditions given to a $2n \times n$ matrix $U$:
\begin{enumerate}[(1)]
\item $ \big( \left[ \begin{array}{c|c} I_{n} & O_{n} \end{array} \right] U \big)^{T} 
J \:
\big( \left[ \begin{array}{c|c} I_{n} & O_{n} \end{array} \right] U \big) = O_n$,

\item $U^T
\left[
\begin{array}{c|c}
O_n & \hspace{2mm} I_{n} \\
\hline
  I_{n} & O_n
\end{array}
\right]
U = O_n$.
\end{enumerate}
\ \\


\noindent \textit{Case} 
$U = M$: the conditions are equivalent to
\begin{enumerate}[(1)]
\item $Y^T J \: Y =  O_n,$
\item $Y + Y^T = O_n,$
\end{enumerate}

\noindent
where $Y = (Y_{i, j})$ denotes the $n \times n$ submatrix of $M$. Let $\alpha$ be the ideal of $F [Y_{i, j}]$ generated by these conditions on $Y$.\\

\noindent \textit{Case} 
$U = N$: for convenience, we divide the $n \times n$ matrix $( Z_{i, j} )$ into four $r \times r$ blocks 
\[
\left[
\begin{array}{c|c}
A & B \\
\hline
C & D
\end{array}
\right].
\]
\noindent
Then equivalent conditions are
\begin{enumerate}[(1)]
\item $A^T = A, \quad B = O_r$,
\item $C^T = -C, \quad B^T = -B, \quad D = -A^T$,
\end{enumerate}

\noindent
so (1) and (2) together:
\[
A^T = A, \quad B = O_r, \quad C^T = -C, \quad D = -A^T.  \tag{4.1}  \label{4.1}
\]

\noindent
When $\beta$ is the ideal of $F [Z_{i, j}]$ generated by $( \ref{4.1} )$, it's easy to see that ${F [Z_{i, j}]} \big/ {\beta}$ is isomorphic to the polynomial ring in indeterminates $\{A_{i, j}\}_{i \leq j}$ and $\{C_{i, j}\}_{i < j}$. That is, $\mbox{Spec} \: \big( {F [Z_{i, j}]} \big/ {\beta} \big) \cong \mathbb{A}^{r^2}$.\\

We claim that the morphism $F [Y_{i, j}]_{h} \xrightarrow{\sim} F [Z_{i, j}]_{g}$ induces an isomorphism between quotients 
\[
 (F [Y_{i, j}] \big/ \alpha)_{h} \quad \cong \quad F [Y_{i, j}]_{h} \big/ \alpha_{h} \quad \rightarrow \quad F [Z_{i, j}]_{g} \big/ \beta_{g} \quad \cong \quad (F [Z_{i, j}] \big/ \beta)_{g}.  \tag{4.2} \label{4.2}
\]

\noindent
Generators of $\alpha$ from the condition $Y^T J Y = O_n$ are entries of the product of matrices
\[
\big( \left[ \begin{array}{c|c} I_{n} & O_{n} \end{array} \right] M \big)^{T} 
J \:
\big( \left[ \begin{array}{c|c} I_{n} & O_{n} \end{array} \right] M \big),
\]

\noindent
and under the map $F [Y_{i, j}]_{h} \rightarrow F [Z_{i, j}]_{g}$  each entry maps to the entry at the exact same location of the product of matrices
\[
\quad \big( \left[ \begin{array}{c|c} I_{n} & O_{n} \end{array} \right] N G^{-1} \big)^{T} 
J \:
\big( \left[ \begin{array}{c|c} I_{n} & O_{n} \end{array} \right] N G^{-1} \big),
\]

\noindent
which is equal to
\[
(G^{-1})^T  \big( \left[ \begin{array}{c|c} I_{n} & O_{n} \end{array} \right] N \big)^{T} 
J \:
\big( \left[ \begin{array}{c|c} I_{n} & O_{n} \end{array} \right] N \big) G^{-1}.
\]

\noindent
Note that entries of the product
\[
\big( \left[ \begin{array}{c|c} I_{n} & O_{n} \end{array} \right] N \big)^{T} 
J \:
\big( \left[ \begin{array}{c|c} I_{n} & O_{n} \end{array} \right] N \big)
\]

\noindent
are in $\beta$. It follows that generators of $\alpha$ from the first condition map to elements of $\beta_g$. Likewise, the generators of $\alpha$ obtained from $Y + Y^T = O_n$ are entries of the product

$$M^T 
\left[
\begin{array}{c|c}
O_n & \hspace{2mm} I_{n} \\
\hline
  I_{n} & O_n
\end{array}
\right]
M,$$

\noindent
which map to the entries at the same location of
\[
(N G^{-1})^T
\left[
\begin{array}{c|c}
O_n & \hspace{2mm} I_{n} \\
\hline
  I_{n} & O_n
\end{array}
\right]
N G^{-1}
\quad
=
\quad
(G^{-1})^T \{ N^T 
\left[
\begin{array}{c|c}
O_n & \hspace{2mm} I_{n} \\
\hline
  I_{n} & O_n
\end{array}
\right]
N \} G^{-1}.
\]

\noindent
These entries are in $\beta_g$, so we know that the morphism $( \ref{4.2} )$ is well-defined. In the same way, we can show that generators of $\beta$ map to elements of $\alpha_h$ under the inverse morphism 
$ F [Z_{i, j}]_{g} \rightarrow  F [Y_{i, j}]_{h}$. This proves that the morphism $( \ref{4.2} )$ is isomorphic.

Finally, we consider the last condition $\sum_{i=1}^r Y_{i \overline{i}} = 0$ imposed on V. We're interested in the image of  $\sum_{i=1}^r Y_{i \overline{i}}$ under the morphism $( \ref{4.2} )$. It's not difficult to see that
\[
 \sum_{i=1}^r Y_{i \overline{i}} = \mbox{tr} 
\renewcommand{\arraystretch}{1.5}
\left( \left[
\begin{array}{ccc}
Y_{1, \overline{1}} &  \cdots & Y_{1, \overline{r}} \\
\vdots &  & \vdots \\
Y_{r, \overline{1}} & \cdots & Y_{r, \overline{r}} \\
\end{array}
\right] \right)
\]

\noindent
maps to $\mbox{tr} (AC^{-1})$ under the morphism $F [Y_{i, j}]_{h} \xrightarrow{\sim} F [Z_{i, j}]_{g}$. Let $C^{i,j}$ denote the cofactor of the $(i, j)$ entry of the matrix $C$. Then
\begin{flalign*}
\mbox{tr} (AC^{-1}) &= \frac{1}{det(C)} \sum_{i=1}^{r} \sum_{j=1}^{r} A_{i, j} C^{i, j} \\
&= \frac{1}{det(C)} \{  \sum_{i=1}^{r} A_{i, i} C^{i, i} + \sum_{i < j} (A_{i, j} C^{i, j} + A_{j, i} C^{j, i}) \}.
\end{flalign*}

\noindent
Recall that $r$ is an even number. Since $C$ is a skew-symmetric $r \times r$ matrix, $C^{i, j} = -C^{j, i}$ if $i < j$ and $C^{i, i} = 0$ for all $i$. Furthermore the matrix $A$ is symmetric, so we have $\mbox{tr} (AC^{-1}) = 0$ in $F [Z_{i, j}]_{g} \big/ \beta_{g}$. In conclusion,
\[
F[V]_{f} \cong \big( F [Y_{i, j}] \big/ \alpha + (\sum_{i=1}^r Y_{i \overline{i}}) \big)_h \cong (F [Z_{i, j}] \big/ \beta)_{g}.  \tag{4.3} \label{4.3} \]

\noindent
Since $\mbox{Spec} \: (F [Z_{i, j}] \big/ \beta) \cong \mathbb{A}^{r^2}$, it proves the lemma.
\end{proof}

\begin{thm}  
The coordinate ring $F[V]$ is an integral domain.
\end{thm}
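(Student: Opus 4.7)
The plan is to combine the two preceding lemmas in a very short argument. By Lemma~4.2, $\mathrm{Spec}\,F[V]_f$ is isomorphic to an open subscheme of $\mathbb{A}^{r^2}$. Since $\mathbb{A}^{r^2}$ is integral and any nonempty open subscheme of an integral scheme is again integral, the ring $F[V]_f$ is an integral domain (it is a localization of the polynomial ring $F[Z_{i,j}]/\beta \cong F[\{A_{i,j}\}_{i \le j}, \{C_{i,j}\}_{i<j}]$ at $g$, hence a subring of a field of rational functions).

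Next I invoke Lemma~4.1: $f$ is not a zero divisor in $F[V]$. This guarantees that the canonical localization map
\[
\iota\colon F[V] \longrightarrow F[V]_f
\]
is injective, since the kernel of the localization at a non-zero-divisor is trivial. Explicitly, if $a \in F[V]$ satisfies $a/1 = 0$ in $F[V]_f$, there exists $k \ge 0$ with $f^k a = 0$, and Lemma~4.1 forces $a = 0$.

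Therefore $F[V]$ embeds into the integral domain $F[V]_f$. Any subring of an integral domain is itself an integral domain, so $F[V]$ is an integral domain. This completes the proof; no further calculation is required. The substance of the result has already been absorbed into the two lemmas, so there is no real obstacle remaining at this stage — the only thing to verify carefully is that $F[V]_f$ is nonzero (which follows since $F[V]$ is nonzero and $f$ is a non-zero-divisor, so $1 \neq 0$ in $F[V]_f$) and that the standard fact about injectivity of localization at a non-zero-divisor applies here, both of which are immediate.
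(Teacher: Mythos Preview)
Your proof is correct and follows exactly the same route as the paper: use Lemma~4.2 to see that $F[V]_f$ is an integral domain, then use Lemma~4.1 to obtain an injection $F[V]\hookrightarrow F[V]_f$, and conclude. The extra remarks you include (injectivity of localization at a non-zero-divisor, nontriviality of $F[V]_f$) are sound elaborations of steps the paper leaves implicit.
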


\begin{proof}
From the isomorphism $( \ref{4.3} )$, we know that $F[V]_f$ is an integral domain. Then by Lemma 4.1, there is an embedding $F[V] \rightarrow F[V]_f$, and hence $F[V]$ must also be an integral domain.
\end{proof}

\newpage


\end{document}